\documentclass[a4paper, 12pt]{amsart}
\usepackage{amsfonts}

\textwidth 160mm \textheight 230mm \topmargin -5mm \oddsidemargin
5mm \evensidemargin 5mm \baselineskip+6pt

\usepackage{amsmath}

% abbreviazioni per i caratteri corsivi {\cal }

%abbreviazione per i caratteri gotici {\frak}

% abbreviazioni per i caratteri grassetto {\Bbb }

\def\bc{{\mathbb C}}

\def\bn{{\mathbb N}}

\def\br{{\mathbb R}}

% abbreviazioni per i caratteri greci
\def\a{\alpha}
\def\b{\beta}

\def\l{\lambda} 

\def\m{\mu}
\def\p{\psi}
\def\n{\nu}

\def\t{\tau}
\def\f{\varphi} 
\def\v{\phi}
  
\def\w{\omega} 

\def\xb{{\mathbf{x}}}

\def\id{{\bf 1}\!\!{\rm I}}

\newtheorem{theorem}{Theorem}[section]

\newtheorem{proposition}[theorem]{Proposition}
\newtheorem{corollary}[theorem]{Corollary}
\theoremstyle{definition}
\newtheorem{definition}[theorem]{Definition}
\newtheorem{remark}[theorem]{Remark}
\newtheorem{example}[theorem]{Example}

\begin{document}
\date{2014-3-12}
\title{Relative ergodic properties of C*-dynamical systems}
\author{Rocco Duvenhage and Farrukh Mukhamedov}
\address{Department of Physics\\
University of Pretoria\\
Pretoria 0002\\
South Africa}
\email{rocco.duvenhage@up.ac.za}
\address{Department of Computational and Theoretical Sciences, Faculty of
Science\\
International Islamic University Malaysia, P.O. Box 141, 25710 Kuantan,
Pahang\\
Malaysia} \email{far75m@gmail.com,\ farrukh\_m@iium.edu.my}

\begin{abstract}
We study various ergodic properties of C*-dynamical systems inspired
by unique ergodicity. In particular we work in a framework allowing
for ergodic properties defined relative to various subspaces, and in
terms of weighted means. Our main results are characterizations of such
relative ergodic properties, ergodic theorems resulting from these properties,
and examples exhibiting these properties.

\vskip 0.3cm \noindent {\it Mathematics
Subject Classification}: 46L55, 46L51, 28D05, 60J99.\\
{\it Key words}: unique ergodicity, unique weak mixing,
$C^*$-dynamical systems, semigroup actions, weighted means, higher order mixing, joinings.
\end{abstract}

\maketitle

\section{Introduction}

In recent years there has been much activity to study various
ergodic theorems and ergodic properties of C*- and W*-dynamical
systems. See for example \cite{NSZ},\cite{Fid}, \cite{Fid2},
\cite{BDS} and \cite{AET} (also see \cite{Kr} for a general account of
ergodic theorems). When studying ergodic properties of such
a system, it has become clear that it is often necessary to work
relative to some subalgebra (or even some more general subspace) of
the C*- or W*-algebra involved. This is already a standard idea in
classical ergodic theory (see for example \cite{HK05}), and it has
indeed been important in recent work in the noncommutative theory as
well, as can be seen for example in \cite{AD}, \cite{FM},
\cite{Fid2}, \cite{AET} and \cite{D3}. In this paper we continue
this development, focussing in particular on ideas related to Abadie and
Dykema's work \cite{AD} on relative unique ergodicity, but as opposed to
\cite{AD}, not just relative to fixed point algebras.

Furthermore, the study and application of ergodic theorems have revealed
that it is often necessary that the ordinary Cesaro means be replaced
by weighted averages
\begin{equation}\label{a_k}
\sum_{k=0}^{n-1}a_kf(T^kx).
\end{equation}
Then it is for example natural to ask: is there a weaker summation than
Cesaro, ensuring unique ergodicity. In \cite{K} it has been
established that unique ergodicity implies uniform convergence of
\eqref{a_k}, when $\{a_k\}$ is a Riesz weight (see also \cite{I} for
similar results). In \cite{BLRT} similar problems were considered
for transformations of Hilbert spaces.

We note that weighted averages have been studied and applied at least
since the 1960's for $\mathbb{Z}$-actions in classical ergodic
theory; see for example \cite{Baxter} and \cite{Krengel}.
Furthermore, weighted averages remain relevant in current research. For
example, recently Dykema and Schultz used a weighted mean ergodic
theorem in an interesting way in operator theory \cite{DykS}, while
in \cite{K} an application to uniform distributions was considered.
This motivates the general study of weighted ergodic theorems. Therefore
much of the work in this paper, though not all, is done in terms of weighted
averages. We also refer the reader to \cite{JLO, cls} for other kinds of
weighted ergodic theorems in the classical and the noncommutative setting.

More generally, keep in mind \cite{BR} that the theory of quantum dynamical
systems provides a convenient mathematical description of irreversible
dynamics of an open quantum system (see \cite{AH}) and the investigation of
ergodic properties of such dynamical systems has had a considerable
growth. In the quantum setting, the matter is more complicated than in
the classical case. Some differences between classical and quantum
situations are pointed out in \cite{AH},\cite{NSZ}. This motivates
an interest to study dynamics of quantum systems (see
\cite{FR1,FR2,FV}). Therefore, it is then natural to address the
study of the possible generalizations to the quantum case of various
ergodic properties known for classical dynamical systems. In particular in
\cite{Av,LP,MT} a non-commutative notion of unique ergodicity was
defined, and certain properties were studied. Subsequently in \cite{AD}
a general notion of unique ergodicity for automorphisms of a
$C^*$-algebra relative to its fixed point subalgebra was
introduced. In \cite{AM} a generalization of such a notion for
positive mappings of $C^*$-algebras, and its characterization in
term of Riesz means are given.

Motivated by the discussion above, in this paper we study various relative
ergodic properties of C*-dynamical systems. The properties are to a large
extent inspired by the notion of unique ergodicity relative to the fixed point
space as introduced in \cite{AD}, but of a more general form, for example
allowing one to work relative to other spaces than just the fixed point space,
and in terms of weighted means and more general semigroup actions.

On the one hand we develop general theory, using for example techniques and
ideas from recent work flowing from Abadie and Dykema's paper \cite{AD} and
joinings. On the other hand we consider specific examples, in particular where
we have an action of $\mathbb{Z}$ on a reduced group C*-algebra, to illustrate
these ergodic properties.

General notions regarding weighted means are discussed in Section 2.
In the process general definitions regarding C*-dynamical systems
are also introduced. Much of this is used later on in the paper as
well. Weighted ergodic theorems are briefly treated in this section,
including a version of the recent result of \cite{AD,AM} related to
unique ergodicity relative to the fixed point space of a C*-dynamical system,
and a weighted version of an ergodic theorem from \cite{DS} for disjoint systems.

In line with the discussion of ergodic properties relative to subspaces
above, $\left(E,S\right) $-mixing is introduced in Section 3. This is a
generalization of the mixing condition introduced and studied in \cite{Fid2}.
This generalization is natural when the subspace relative to which one works
is not necessarily the fixed point space. Here $E$ is in fact a general
linear map from the algebra to itself, but typically in examples $E$
will project onto some subalgebra or operator system, which is then
the subspace relative to which we work as discussed above.
On the other hand, $S$ is a set of bounded linear functionals on the algebra
whose role will be explained later on. Section 3 focuses on a class of
examples for reduced group C*-algebras illustrating $\left(E,S\right)
$-mixing where, unlike the examples in \cite{Fid2}, $E$ need not be the
conditional expectation onto the fixed point algebra. We also briefly discuss
why it is relevant to consider different subalgebras for the same system.

These ideas set the stage for Section 4 where certain weaker ergodic
properties are discussed, namely unique
$\left(E,S\right)$-ergodicity and unique $\left(E,S\right)$-weak
mixing. Much of Section 4 is devoted to developing general theory
for these properties in the context of weighted means. In particular it is shown
how unique $\left(E,S\right)$-weak mixing of a system can be
characterized in terms of unique $\left(E,S\right) $-ergodicity of
the product of the system with itself, in analogy to the well-known
result in classical ergodic theory (see \cite{ALW}). Moreover, our
results extends well-known results of \cite{Wa,L}.

We conclude the paper with Section 5, devoted to higher order mixing
properties and higher order recurrence properties. These ideas have
received attention over the past few years, for example in
\cite{NSZ}, \cite{D0}, \cite{Fid}, \cite{BDS} and \cite{AET}, which
focused on developing general theory for asymptotically abelian
systems. In Section 5 however, we focus on a specific class of
examples, building on Section 3, where we have higher order mixing
properties, but as opposed to the previous literature, we don't have
asymptotic abelianness. These properties involve so-called multitime
correlations functions, which also appear in the physics literature
\cite{Pe}.

\section{Weighted means}

We follow a simple approach to weighted means which is well suited for our
purposes. For a more abstract approach the reader is referred to
\cite[Section 4]{LT}. To illustrate the ideas around weighted means that we
will study, we first consider the mean ergodic theorem in Hilbert space.
Then we turn to relative unique ergodicity in the sense of \cite{AD}, and
lastly we consider the case of disjoint systems (see \cite{DS}).

We first set up a simple abstract setting to study weighted means. It will
be convenient to work in terms of the following definition, which can be
viewed as a generalization of the concept of a F\o lner sequence. Note that
the integrals we use here are Bochner integrals (see for example
\cite[Appendix E]{Co}, and also \cite[Chapter II]{DU} and \cite[Chapter III]
{DunS} for background). Note that the semigroup $G$ below need not have an
identity.

\begin{definition}
Let $G$ be a topological semigroup with a right invariant measure $\rho $ on
its Borel $\sigma $-algebra, and let $X$ be a Banach space. Consider a net
$\left( f_{\iota }\right) \equiv \left( f_{\iota }\right) _{\iota \in I}$
indexed by some directed set $I$, where $f_{\iota }\in L^{1}\left( \rho
\right) $, $f_{\iota }:G\rightarrow \mathbb{R}^{+}=[0,\infty )$ and $\int
f_{\iota }d\rho \neq 0$. Assume furthermore that $f_{\iota }F$ is Bochner
integrable for all bounded Borel measurable $F:G\rightarrow X$, and that for
such $F$
\begin{equation*}
\lim_{\iota }\frac{\int f_{\iota }(g)\left[ F(g)-F(gh)\right] dg}{\int
f_{\iota }d\rho }=0
\end{equation*}
in the norm topology for all $h\in G$ (where $dg$ refers to integration with respect to the
measure $\rho $). Then we call $\left( f_{\iota }\right) $ a \emph{(right)
averaging net} for $\left( G,X\right) $. If we rather require the condition
\begin{equation*}
\lim_{\iota }\frac{\int f_{\iota }(g)\left[ F(g)-F(hg)\right] dg}{\int
f_{\iota }d\rho }=0
\end{equation*}%
for all $h\in G$, then we call $\left( f_{\iota }\right) $ a \emph{left
averaging net} for $\left( G,X\right) $
\end{definition}

The $f_{\iota }$ will act as the weights in the ergodic theorems below. All
of the integrals above are of course over the whole of $G$. Strictly
speaking we should say that $\left( f_{\iota }\right) $ is an averaging net
for $\left( G,\rho ,X\right) $, but for convenience we suppress the $\rho $
in this notation; no ambiguities will arise. In fact, we often even write
$\int f_{\iota }d\rho $ simply as $\int f_{\iota }$. When we do not specify whether
a given averaging net is right or left, it is assumed to be right. Keep in
mind that the requirement that $f_{\iota }F$ be Bochner integrable means in
particular that its range has to be separable, i.e. $f_{\iota }F$ has to be
strongly measurable (see \cite[Appendix E]{Co}). However, the Borel
measurable functions $f_{\iota }F$ are automatically strongly measurable if
$X$ is separable, or if $f_{\iota }F$ is continuous and either $G$ is
separable or $f_{\iota }$ has compact support. In most results in this paper
we use $X=\mathbb{C}$.

The following sufficient conditions for a net to be an averaging net, which
are independent of the Banach space, are easily shown, and in particular
shows how F\o lner nets form a special case of averaging nets:

\begin{proposition}
Let $G$ be a topological semigroup with a right invariant measure $\rho $ on
its Borel $\sigma $-algebra. Let $\left( \Lambda _{\iota }\right) $ be a
F\o lner net in $G$, i.e. $\Lambda _{\iota }$ is a compact set in the Borel
$\sigma $-algebra of $G$ with $0<\rho\left(\Lambda_{\iota}\right)<\infty$ and
\begin{equation*}
\lim_{\iota }\frac{\rho \left( \Lambda _{\iota }\bigtriangleup \left(
\Lambda _{\iota }g\right) \right) }{\rho \left( \Lambda _{\iota }\right) }=0
\end{equation*}
for all $g\in G$. Let $f_{\iota }:=\chi _{\Lambda _{\iota }}$ be the
characteristic (i.e. indicator) function of $\Lambda _{\iota }$ on $G$. Then
$\left( f_{\iota }\right) $ is an averaging net for $\left( G,X\right) $ for
any Banach space $X$.
\end{proposition}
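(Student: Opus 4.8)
The plan is to verify, for $f_\iota=\chi_{\Lambda_\iota}$, each clause in the definition of a (right) averaging net. The routine requirements are immediate: every $\chi_{\Lambda_\iota}$ is a nonnegative Borel function with $\int\chi_{\Lambda_\iota}\,d\rho=\rho(\Lambda_\iota)\in(0,\infty)$, so $\chi_{\Lambda_\iota}\in L^1(\rho)$ and $\int f_\iota\neq 0$. For Bochner integrability of $f_\iota F$ with $F$ bounded Borel measurable, the estimate $\int\|\chi_{\Lambda_\iota}F\|\,d\rho\le\|F\|_\infty\,\rho(\Lambda_\iota)<\infty$ settles integrability once strong measurability is granted; the latter holds in the situations recorded in the remark after the definition (for instance when $X$ is separable), which I would simply invoke rather than belabour. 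The genuine content is the limiting condition.

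I would set $A=\Lambda_\iota$ and $B=\Lambda_\iota h=R_h(A)$, which is compact, hence Borel, as the continuous image of a compact set. Right invariance of $\rho$ I read as $(R_h)_*\rho=\rho$, yielding both the change-of-variables identity $\int\Phi(gh)\,dg=\int\Phi(g)\,dg$ (valid also for Banach-space-valued $\Phi$) and the set identity $\rho(R_h^{-1}(E))=\rho(E)$. Applying the former to $\Phi=\chi_B F$ and using $\chi_B(gh)=\chi_{R_h^{-1}(B)}(g)$ gives $\int\chi_{R_h^{-1}(B)}(g)F(gh)\,dg=\int_B F$.

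The point requiring care, and where the semigroup setting differs from the group case, is that $R_h$ need not be injective, so one cannot just substitute $g'=gh$ to turn $\int_A F(gh)\,dg$ into $\int_B F$. One has only the inclusion $A\subseteq R_h^{-1}(B)$, and must control the overflow. Splitting $\int\chi_A(g)F(gh)\,dg=\int_B F-\int_{R_h^{-1}(B)\setminus A}F(gh)\,dg$ gives
\[
\int\chi_A(g)\big[F(g)-F(gh)\big]\,dg=\Big(\int_A F-\int_B F\Big)+\int_{R_h^{-1}(B)\setminus A}F(gh)\,dg .
\]
The first bracket equals $\int_{A\setminus B}F-\int_{B\setminus A}F$, of norm at most $\|F\|_\infty\,\rho(A\bigtriangleup B)$. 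For the second, right invariance gives $\rho(R_h^{-1}(B))=\rho(B)$, so $\rho(R_h^{-1}(B)\setminus A)=\rho(B)-\rho(A)\le\rho(A\bigtriangleup B)$, whence that term too has norm at most $\|F\|_\infty\,\rho(A\bigtriangleup B)$.

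Combining, I obtain $\big\|\int\chi_{\Lambda_\iota}(g)[F(g)-F(gh)]\,dg\big\|\le 2\|F\|_\infty\,\rho(\Lambda_\iota\bigtriangleup\Lambda_\iota h)$; dividing by $\int f_\iota=\rho(\Lambda_\iota)$ and invoking the F\o lner condition with $g=h$ drives the quotient to $0$ in norm, which is exactly the defining limit. I expect the main obstacle to be precisely this non-invertibility of right translation: in a group the estimate is a one-line change of variables, whereas here the honest accounting of $R_h^{-1}(B)$ against $B$ — itself controlled by the F\o lner property via $\rho(B)-\rho(A)\le\rho(A\bigtriangleup B)$ — is what makes the argument go through, with the strong-measurability clause for non-separable $X$ being the only other point one must not overlook.
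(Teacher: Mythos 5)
The paper gives no proof of this proposition --- it is dismissed as ``easily shown'' --- so there is nothing to compare against line by line; your argument is correct and supplies the missing details. The decomposition of $\int\chi_A(g)[F(g)-F(gh)]\,dg$ into $\bigl(\int_A F-\int_B F\bigr)$ plus the overflow term over $R_h^{-1}(B)\setminus A$, each bounded in norm by $\|F\|_\infty\,\rho(\Lambda_\iota\bigtriangleup\Lambda_\iota h)$, is sound, and your emphasis on the non-injectivity of right translation is the one genuinely non-routine point: it forces one to read right invariance as $(R_h)_*\rho=\rho$ (the only reading under which the change-of-variables formula, and hence the proposition, can hold in a semigroup), and your identity $\rho(R_h^{-1}(B)\setminus A)=\rho(B)-\rho(A)\le\rho(B\setminus A)$ disposes of it cleanly. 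Two small matters you gesture at rather than settle are really defects of the statement itself rather than of your proof: strong measurability of $F$ (hence of $F\circ R_h$) is not automatic for bounded Borel $F$ into a non-separable $X$, so ``any Banach space $X$'' must be read subject to the caveats the authors record after Definition 2.1; and $\rho(\Lambda_\iota h)<\infty$ is only guaranteed eventually (it follows from $\rho(\Lambda_\iota)<\infty$ once the F\o lner ratio is finite), which is all the limit requires but is worth one sentence if you write this up.
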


\begin{proposition}
Let $G$ be a topological group with a right invariant measure $\rho $ on its
Borel $\sigma $-algebra, and let $X$ be a Banach space. Consider a net
$\left( \Lambda _{\iota },f_{\iota }\right) \equiv \left( \Lambda _{\iota
},f_{\iota }\right) _{\iota \in I}$ indexed by some directed set $I$, where
$\Lambda _{\iota }\subset G$ is Borel measurable, and $f_{\iota }\in
L^{1}\left( \Lambda _{\iota }\right) \,$\ (in terms of $\rho $ restricted to
$\Lambda _{\iota }$), such that $f_{\iota }:\Lambda _{\iota }\rightarrow
\mathbb{R}^{+}$ and $\int_{\Lambda _{\iota }}fd\rho \neq 0$. Assume
furthermore that
\begin{equation*}
\lim_{\iota }\frac{\int_{\Lambda _{\iota }\backslash \left( \Lambda _{\iota
}h\right) }f_{\iota }d\rho }{\int_{\Lambda _{\iota }}f_{\iota }d\rho }=0
\text{ and }\lim_{\iota }\frac{\int_{\Lambda _{\iota }\cap \left( \Lambda
_{\iota }h\right) }\left| f_{\iota }(g)-f(gh^{-1})\right| dg}{\int_{\Lambda
_{\iota }}f_{\iota }d\rho }=0
\end{equation*}%
for all $h\in G$. Define a function $f_{\iota }^{\prime }$ on $G$ by
$f_{\iota }^{\prime }(x)=f_{\iota }(x)$ for $x\in \Lambda _{\iota }$, and
$f_{\iota }^{\prime }(x)=0$ for $x\notin \Lambda _{\iota }$. Then
$\left(f_{\iota }^{\prime }\right) $ is an averaging net for $\left( G,X\right) $
for any Banach space $X$.
\end{proposition}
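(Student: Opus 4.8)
The plan is to verify the defining limit condition for a \emph{right} averaging net directly; the remaining requirements on $\left(f_\iota'\right)$ (membership in $L^1(\rho)$, nonnegativity, the fact that $\int f_\iota'\,d\rho = \int_{\Lambda_\iota} f_\iota\,d\rho \neq 0$, and Bochner integrability of $f_\iota' F$) are inherited from the hypotheses under the same measurability provisos recorded after the definition, so I will not dwell on them. Fix a bounded Borel measurable $F:G\rightarrow X$ and $h\in G$, and set $\|F\|_\infty := \sup_g \|F(g)\|<\infty$.

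The first step is to remove the shift on $F$ by a change of variables. Since $\rho$ is right invariant we have $\int \psi(g)\,dg = \int \psi(gh)\,dg$ for integrable $\psi$; applying this with $\psi(g)=f_\iota'(gh^{-1})F(g)$ yields
\begin{equation*}
\int f_\iota'(g) F(gh)\,dg = \int f_\iota'(gh^{-1}) F(g)\,dg,
\end{equation*}
so that $\int f_\iota'(g)\bigl[F(g)-F(gh)\bigr]\,dg = \int \bigl[f_\iota'(g)-f_\iota'(gh^{-1})\bigr]F(g)\,dg$. Taking norms and pulling out $\|F\|_\infty$ reduces the problem to showing that the scalar quantity $\int \left|f_\iota'(g)-f_\iota'(gh^{-1})\right|\,dg$, normalized by $\int_{\Lambda_\iota} f_\iota\,d\rho$, tends to $0$. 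This is precisely where the group structure (availability of $h^{-1}$) is used, which is why the statement is phrased for a group rather than a semigroup.

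Next I would use that $f_\iota'$ is supported on $\Lambda_\iota$ while $f_\iota'(\cdot\, h^{-1})$ is supported on $\Lambda_\iota h$, and split the integral over the three disjoint regions making up $\Lambda_\iota \cup \Lambda_\iota h$:
\begin{equation*}
\int \left|f_\iota'(g)-f_\iota'(gh^{-1})\right|\,dg = \int_{\Lambda_\iota \cap \Lambda_\iota h} \left|f_\iota(g)-f_\iota(gh^{-1})\right|\,dg + \int_{\Lambda_\iota \setminus \Lambda_\iota h} f_\iota(g)\,dg + \int_{\Lambda_\iota h \setminus \Lambda_\iota} f_\iota(gh^{-1})\,dg.
\end{equation*}
After dividing by $\int_{\Lambda_\iota} f_\iota\,d\rho$, the first term vanishes in the limit by the second hypothesis and the second term by the first hypothesis. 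For the third term I would apply right invariance once more: the identity $\int \psi(g)\,dg=\int \psi(gh)\,dg$ with $\psi(g)=\chi_{\Lambda_\iota h\setminus\Lambda_\iota}(g)\,f_\iota'(gh^{-1})$ identifies $\Lambda_\iota h\setminus\Lambda_\iota$ with $\Lambda_\iota\setminus\Lambda_\iota h^{-1}$ and rewrites it as $\int_{\Lambda_\iota\setminus\Lambda_\iota h^{-1}} f_\iota\,d\rho$, which tends to $0$ after normalization by the first hypothesis applied with $h^{-1}$ in place of $h$ (legitimate, since that hypothesis is assumed for all elements of $G$).

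Combining the three estimates gives $\int \left|f_\iota'(g)-f_\iota'(gh^{-1})\right|\,dg\big/\int_{\Lambda_\iota} f_\iota\,d\rho \to 0$, whence the normalized averages of $f_\iota'(g)\bigl[F(g)-F(gh)\bigr]$ converge to $0$ in norm, as required. The only genuinely delicate point is the bookkeeping: organizing the two changes of variables so that the boundary regions $\Lambda_\iota \bigtriangleup \Lambda_\iota h$ are matched correctly against the two assumed limits. Once the three-region split is set up, each piece is controlled by a single direct appeal to one of the two hypotheses, so no further estimation is needed.
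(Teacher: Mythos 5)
Your argument is correct, and since the paper states Proposition 2.3 without proof (it is dismissed as ``easily shown''), yours is evidently the intended argument: the two hypotheses are tailored exactly to the three-region decomposition of $\int\left|f_{\iota}'(g)-f_{\iota}'(gh^{-1})\right|dg$ over $\Lambda_{\iota}\cap\Lambda_{\iota}h$, $\Lambda_{\iota}\setminus\Lambda_{\iota}h$ and $\Lambda_{\iota}h\setminus\Lambda_{\iota}$ that you set up, with right invariance used to shift $F$ onto $f_{\iota}'$ and to convert the third region into an instance of the first hypothesis at $h^{-1}$. The only point worth flagging is the one you already flag yourself: Bochner integrability of $f_{\iota}'F$ for arbitrary $X$ requires the strong-measurability provisos the paper records after Definition 2.1, and both you and the paper treat that as understood.
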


The examples below (for $G=\mathbb{R}$) can be checked by using the proposition above.

\begin{example}
Consider the case $G=\mathbb{R}$. Set $\Lambda_{n}:=[0,n]$ for $n=1,2,3,...$
, or even any real $n>0$. Let $f(t):=t^{s}$ for an $s>-1$. Setting
$f_{n}:=f|_{\Lambda_{n}}$, one can verify that $\left(\Lambda_{n},f_{n}\right)$
gives an averaging net for $\mathbb{R}$ as in Proposition 2.3.
\end{example}

\begin{example}
Similarly $\Lambda_{n}:=[1,n]$ for $n=2,3,...$, or even any real $n>1$,
along with $f(t)=t^{-1}$, gives an averaging net for $\mathbb{R}$.
\end{example}

\begin{example}
Lastly, $\Lambda_{n}:=[0,n]$ and $f_{n}(t):=\left( n-t\right) ^{s}$ for
$s>-1 $, give an averaging net for $\mathbb{R}$.
\end{example}

To illustrate how averaging nets work in a simple setting, we first consider
a weighted mean ergodic theorem in Hilbert space, and then apply it to the
examples above.

\begin{theorem}
Consider a topological semigroup $G$ with a right invariant measure $\rho $
on its Borel $\sigma $-algebra, a Hilbert space $H$, and an averaging net
$\left( f_{\iota }\right) $ for $\left( G,H\right) $. Let $U$ be a
representation of $G$ as contractions on $H$, such that $G\rightarrow
H:g\mapsto U_{g}x$ is Borel measurable for all $x\in H$. Let $P$ be the
projection of $H$ onto the fixed point space $V$ of $U$, namely
\begin{equation*}
V:=\left\{ x\in H:U_{g}x=x\text{ for all }g\in G\right\} \text{.}
\end{equation*}
Then%
\begin{equation*}
\lim_{\iota }\frac{1}{\int f_{\iota }d\rho }\int_{\Lambda _{\iota }}f_{\iota
}(g)U_{g}xdg=Px
\end{equation*}
for all $x\in H$.
\end{theorem}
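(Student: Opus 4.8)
The plan is to follow the classical strategy behind von Neumann's mean ergodic theorem, adapted to the semigroup action and the weighted averages. Write $M_{\iota}x:=\frac{1}{\int f_{\iota}d\rho}\int f_{\iota}(g)U_{g}x\,dg$ for the averaging operators appearing in the statement; since each $U_{g}$ is a contraction and $f_{\iota}\geq 0$, the triangle inequality for Bochner integrals immediately gives $\|M_{\iota}x\|\leq\|x\|$, so the $M_{\iota}$ are uniformly bounded (in fact contractions). Note also that $F(g):=U_{g}y$ is bounded and Borel measurable for each $y\in H$ by hypothesis, so $f_{\iota}F$ is Bochner integrable by the very definition of an averaging net, which is what makes $M_{\iota}$ well defined. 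The strategy is then to split $H$ into the fixed space $V$ and a complementary ``coboundary'' space on which the averages vanish, and treat each piece separately.

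First I would establish the orthogonal decomposition $H=V\oplus W$, where $W:=\overline{\mathrm{span}}\{y-U_{h}y:y\in H,\ h\in G\}$. The key ingredient is the standard fact that a contraction fixes a vector if and only if its adjoint does: if $U_{h}z=z$ then $\|z-U_{h}^{*}z\|^{2}=\|U_{h}^{*}z\|^{2}-\|z\|^{2}\leq 0$, so $U_{h}^{*}z=z$ as well. Using this, for $x\in V$ one computes $\langle x,y-U_{h}y\rangle=\langle x,y\rangle-\langle U_{h}^{*}x,y\rangle=0$, giving $V\perp W$. Conversely, if $z\perp W$ then $\langle z,z-U_{h}z\rangle=0$ for all $h$, whence $\|z-U_{h}z\|^{2}=\|U_{h}z\|^{2}-\|z\|^{2}\leq 0$ and so $z\in V$; thus $W^{\perp}\subseteq V$, and since $W$ is closed the decomposition follows.

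With the decomposition in hand the two cases are routine. On $V$ one has $U_{g}x=x$, so $M_{\iota}x=x=Px$ for every $\iota$, trivially converging. On the generating vectors $x=y-U_{h}y$ of $W$, the representation property $U_{g}U_{h}=U_{gh}$ gives
\begin{equation*}
M_{\iota}(y-U_{h}y)=\frac{1}{\int f_{\iota}d\rho}\int f_{\iota}(g)\big[U_{g}y-U_{gh}y\big]\,dg=\frac{\int f_{\iota}(g)\big[F(g)-F(gh)\big]\,dg}{\int f_{\iota}d\rho},
\end{equation*}
which tends to $0$ by the defining property of the (right) averaging net applied to $F(g)=U_{g}y$. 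By linearity $M_{\iota}x\to 0$ for every $x$ in $\mathrm{span}\{y-U_{h}y\}$, and the uniform bound $\|M_{\iota}\|\leq 1$ lets a standard density (three-$\varepsilon$) argument extend this to all of $W$, where $Px=0$. Finally, writing a general $x=Px+(x-Px)$ with $Px\in V$ and $x-Px\in W$ and adding the two limits yields $M_{\iota}x\to Px$.

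I do not expect a serious obstacle here; the one point that genuinely uses the contraction hypothesis (rather than unitarity) is the adjoint-fixed-point lemma in the second paragraph, and that is where I would concentrate care. The measurability and integrability requirements needed to make sense of $M_{\iota}$ and to apply the averaging-net limit are, conveniently, exactly what is built into Definition 2.1, so no extra work is required there.
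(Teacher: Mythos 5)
Your proof is correct and follows essentially the same route as the paper, which sets $A_{\iota}(x):=\frac{1}{\int f_{\iota}d\rho}\int f_{\iota}(g)U_{g}x\,dg$, kills these averages on $N:=\overline{\mathrm{span}}\{y-U_{h}y\}$ via the averaging-net property, and uses $N^{\perp}=V$; the paper only sketches this, while you have supplied the details (the adjoint-fixed-point lemma for contractions, the uniform bound $\|M_{\iota}\|\leq 1$ for the density step), all of which check out.
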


\begin{proof}
We follow a standard proof of the mean ergodic theorem (see for example
\cite[Section 2]{dBDS}), but we only give it in outline. For $x\in H$, set
\begin{equation*}
A_{\iota }(x):=\frac{1}{\int f_{\iota }d\rho }\int f_{\iota }(g)U_{g}xdg
\end{equation*}
For $x=y-U_{h}y$, for some $y\in H$ and $h\in G$, one has $\lim_{\iota
}A_{\iota }(x)=0$ by the properties of an averaging net. It then also
follows that for any $x\in N:=\overline{\text{span}\left\{ y-U_{h}y:y\in
H,h\in G\right\} }$ we have $\lim_{\iota }A_{\iota }(x)=0$. On the other
hand, for $x\in N^{\perp }=V$, we have $\lim_{\iota }A_{\iota }(x)=x$.
Combining these two facts, the result follows.
\end{proof}

\begin{example}
For Example 2.4, we obtain
\begin{equation*}
\lim_{n\rightarrow\infty}\frac{s+1}{n^{s+1}}\int_{0}^{n}t^{s}U_{t}xdt=Px
\end{equation*}
With a substitution and some manipulation this gives
\begin{equation*}
\lim_{n\rightarrow\infty}\frac{1}{n}\int_{0}^{n}U_{t^{1/(s+1)}}xdt=Px
\end{equation*}
\end{example}

\begin{example}
For Example 2.5, we obtain
\begin{equation*}
\lim_{n\rightarrow\infty}\frac{1}{\ln n}\int_{1}^{n}t^{-1}U_{t}xdt=Px
\end{equation*}
With a substitution and some manipulation this gives
\begin{equation*}
\lim_{n\rightarrow\infty}\frac{1}{n}\int_{0}^{n}U_{e^{t}}xdt=Px
\end{equation*}
\end{example}

\begin{example}
For Example 2.6, we obtain
\begin{equation*}
\lim_{n\rightarrow\infty}\frac{s+1}{n^{s+1}}\int_{0}^{n}\left( n-t\right) ^{s}U_{t}xdt=Px
\end{equation*}
which is a type of Voronoi average as discussed in \cite{K}, but now for the
group $\mathbb{R}$. In fact, more generally, if $f:\mathbb{R}\rightarrow
\mathbb{R}^{+}$ is such that $\left( \Lambda _{n},f|_{\Lambda _{n}}\right) $
gives an averaging net for $\mathbb{R}$ as in Proposition 2.3, then the functions $f_{n}(t):=f(n-t)$
also give an averaging net for $\mathbb{R}$ via $\left( \Lambda _{n},f_{n}\right)$.
\end{example}

With the basic framework of weighted means now in place, we turn to relative
unique ergodicity.

\begin{definition}
A \emph{C*-dynamical system} $\left( A,\alpha \right) $ consists of
a unital C*-algebra $A$ and an action $\alpha $ of a semigroup $G$
on $A$ as unital completely positive maps $\alpha _{g}:A\rightarrow
A$, i.e. as Markov operators. The \emph{fixed point operator system}
of a C*-dynamical system $\left( A,\alpha \right) $ is defined as
\begin{equation*}
A^{\alpha }:=\left\{ a\in A:\alpha _{g}(a)=a\text{ for all }g\in G\right\}
\text{.}
\end{equation*}
\end{definition}

By an \emph{operator system} of $A$, we mean a norm closed self-adjoint
vector subspace of $A$ containing the unit of $A$. Whenever we consider a
C*-dynamical system $\left( A,\alpha \right) $, the notation $G$ for the
semigroup is implied. Note that since $\alpha _{g}$ is positive and $\alpha
_{g}\left( 1\right) =1$, we have $\left\| \alpha _{g}\right\| =1$.

\begin{definition}
A C*-dynamical system $\left( A,\alpha \right) $ is called \emph{amenable}
if the following conditions are met: $G$ is a topological semigroup with a
right invariant measure $\rho $ on its Borel $\sigma $-algebra, and
furthermore $\left( G,A\right) $ has an averaging net $\left( f_{\iota
}\right) $. The function $G\rightarrow A:g\mapsto \alpha _{g}(a)$ is Borel
measurable for every $a\in A$.
\end{definition}

A central notion in our work will be that of an invariant state:

\begin{definition}
Given a C*-dynamical system $\left( A,\alpha \right) $, a state $\mu $ on $A$
is called an \emph{invariant} state of $\left( A,\alpha \right) $, or
alternatively an $\alpha $\emph{-invariant} state, if $\mu \circ \alpha
_{g}=\mu $ for all $g\in G$.
\end{definition}

\begin{definition}
We call the C*-dynamical system $\left( A,\alpha \right) $ \emph{uniquely
ergodic relative to }$A^{\alpha }$ if every state on $A^{\alpha }$ has a
unique extension to an invariant state of $\left( A,\alpha \right) $.
\end{definition}

We now consider a weighted version of the result of Abadie and Dykema
\cite[Theorem 3.2]{AD}, where the notion of relative unique ergodicity was
first introduced. The proof requires only minor modifications of that of
\cite[Theorem 3.2]{AD}. For example, even though $A^{\alpha }$ is in general
only an operator system, rather than a C*-algebra, virtually nothing in the
proof related to this aspect changes; one should just work in terms of norm
one projections (equivalently, positive projections, since they project onto
operator systems, which contain the unit) instead of conditional expectations (also see \cite{AM} and
\cite{FM}). Furthermore, in Theorem 2.15, complete positivity of $\alpha_g$
is not needed, just positivity, though we do use complete positivity in the
rest of the paper since we consider tensor products of systems.

Note that if we say that a norm one projection $E:A\rightarrow A^{\alpha }$
is $\alpha $\emph{-invariant}, we mean that $E\circ \alpha _{g}=E$ for all
$g\in G$, and similarly for linear functionals. Existence of limits, closures
etc. are all in terms of the norm topology on $A$. Also, when a right
invariant measure $\rho $ is also a left invariant measure on the Borel
$\sigma $-algebra of $G$, then we can call $G$ \emph{unimodular} with respect
to $\rho $.

\begin{theorem}\label{UE-A}
Let $\left( A,\alpha \right) $ be an amenable C*-dynamical system,
with $G$ unimodular with respect to the measure $\rho $, and let
$\left( f_{\iota }\right) $ be both a right and left averaging net
for $\left( G,A\right) $. Then statements (i) to (vi) below are
equivalent.
\begin{enumerate}
\item[(i)] The system $\left( A,\alpha \right) $ is uniquely ergodic
relative to $A^{\alpha }$.

\item[(ii)] The limit
\begin{equation*}
\lim_{\iota }\frac{1}{\int f_{\iota }d\rho }\int f_{\iota }\left( g\right)
\alpha _{g}(a)dg
\end{equation*}
exists for every $a\in A$.

\item[(iii)] The subspace $A^{\alpha}+$ span$\left\{ a-\alpha_{g}(a):g\in G,a\in
A\right\} $ is dense in $A$.

\item[(iv)] The equality $A=A^{\alpha}+\overline{\text{span}\left\{ a-\alpha
_{g}(a):g\in G,a\in A\right\} }$ holds.

\item[(v)] Every bounded linear functional on $A^{\alpha}$ has a unique bounded
$\alpha$-invariant extension to $A$ with the same norm.

\item[(vi)] There is a positive projection $E$ of $A$ onto some operator
system $B$ of $A$ such that for every $a\in A$ and $\f\in S(A)$, where $S(A)$ denotes the
set of all states on $A$, one has
\begin{equation*}
\lim_{\iota }\frac{1}{\int f_{\iota }d\rho }\int f_{\iota }\left(
g\right)\f(\alpha _{g}(a))dg=\f(E(a))
\end{equation*}
(in which case necessarily $B=A^\alpha$ and $\alpha_g\circ E=E=E\circ\alpha_g$
for all $g\in G$).
\end{enumerate}
Furthermore, statements (i) to (vi) imply the following statements:

\begin{enumerate}
\item[(vii)] There exists a unique $\alpha $-invariant positive projection
$E$ from $A$ onto $A^{\alpha }$.

\item[(viii)] The positive projection $E$ in (vii) is given by
\begin{equation*}
Ea=\lim_{\iota }\frac{1}{\int f_{\iota }d\rho }\int f_{\iota }\left(
g\right) \alpha _{g}(a)dg
\end{equation*}%
for all $a\in A$.
\end{enumerate}
\end{theorem}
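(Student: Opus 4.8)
The plan is to prove the equivalence of (i)--(vi) in a cycle, and then deduce (vii)--(viii), essentially following the structure of \cite[Theorem 3.2]{AD} but adapting it to the weighted-mean setting and to the fact that $A^{\alpha}$ is only an operator system. I would begin with the "easy" implications that exploit the averaging net directly. The key computational engine is that for any $x$ of the form $a-\alpha_h(a)$, the averaged integral tends to $0$: indeed
\begin{equation*}
\frac{1}{\int f_{\iota}d\rho}\int f_{\iota}(g)\bigl[\alpha_g(a)-\alpha_g(\alpha_h(a))\bigr]dg
=\frac{1}{\int f_{\iota}d\rho}\int f_{\iota}(g)\bigl[\alpha_g(a)-\alpha_{gh}(a)\bigr]dg\to 0
\end{equation*}
by the right-averaging property applied to $F(g)=\alpha_g(a)$, which is Borel measurable and bounded by amenability. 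On the fixed-point side, if $a\in A^{\alpha}$ then $\alpha_g(a)=a$ and the average is constantly $a$. These two observations give the analogue of the Hilbert-space argument in Theorem 2.11. From here, first I would establish (iii)$\Leftrightarrow$(iv) (trivial, since (iii) is just density of the subspace whose closure appears in (iv)), and (iii)$\Rightarrow$(ii) and (iii)$\Rightarrow$(viii)-type statements: on the dense subspace $A^{\alpha}+\mathrm{span}\{a-\alpha_g(a)\}$ the averages converge (to the $A^{\alpha}$-component), and since the averaging operators $A_{\iota}$ are uniformly bounded (each $\alpha_g$ is a contraction, so $\|A_{\iota}\|\le 1$), an $\varepsilon/3$ argument extends convergence to all of $A$, giving (ii).

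For the functional-analytic equivalences I would pass to the dual. The crucial point is that $\alpha$-invariant functionals on $A$ annihilate $\mathrm{span}\{a-\alpha_g(a)\}$, so the annihilator of $N:=\overline{\mathrm{span}\{a-\alpha_g(a):g\in G,a\in A\}}$ is exactly the space of invariant functionals. Thus (iv) says $A=A^{\alpha}\oplus N$ (as a sum, not necessarily direct), and a Hahn--Banach/quotient argument identifies restriction-to-$A^{\alpha}$ of invariant functionals with the dual of $A/N\cong A^{\alpha}$. This yields (iv)$\Leftrightarrow$(v) and, restricting to states, (v)$\Leftrightarrow$(i): unique norm-preserving invariant extension of each functional on $A^{\alpha}$ is equivalent, by positivity and the fact that states are exactly the norm-one positive functionals fixing the unit, to unique invariant extension of each state on $A^{\alpha}$. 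Here is where the "operator system rather than C*-algebra" subtlety enters: I would replace conditional expectations by norm-one (equivalently positive, unit-preserving) projections onto $A^{\alpha}$, and invoke the standard fact that a norm-one projection onto a unital operator system is automatically positive and unital, so the extension data really are states. This part is where I expect the main obstacle to lie---one must be careful that the unique extension produced is genuinely a \emph{state} (positive and unital), not merely a norm-one functional, and that the resulting projection $E$ in (vi) lands in $A^{\alpha}$ and commutes with the action; I would verify $\alpha_g\circ E=E=E\circ\alpha_g$ by testing against the convergent averages and using invariance.

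Finally, to close the cycle I would show (ii)$\Rightarrow$(vi) and (vi)$\Rightarrow$(i). Given (ii), define $E(a)$ to be the limit of the averages; linearity and contractivity of the $A_{\iota}$ make $E$ a well-defined positive linear contraction, $E$ fixes $A^{\alpha}$ pointwise, $E$ maps into $A^{\alpha}$ (since applying $\alpha_h$ and using right-\emph{and}-left averaging shows the limit is fixed by every $\alpha_h$---this is precisely where unimodularity and the two-sided averaging hypothesis are used), and $E^2=E$; setting $\varphi(E(a))$ as the limit of $\varphi$ applied to the averages (by continuity of $\varphi$ and boundedness, the limit passes inside) gives (vi) with $B=A^{\alpha}$. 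Conversely (vi)$\Rightarrow$(i) follows because the formula pins down the value of any invariant state on all of $A$ from its values on $B=A^{\alpha}$, forcing uniqueness of the invariant extension. For (vii)--(viii), having (i)--(vi) we already have a positive projection $E$ onto $A^{\alpha}$ given by the averaging limit in (viii); invariance $\alpha_g\circ E=E=E\circ\alpha_g$ comes from (vi), and uniqueness follows because any $\alpha$-invariant positive projection onto $A^{\alpha}$ must agree with the average of $\alpha_g(a)$ (it is invariant, hence unaffected by averaging, yet it also equals the limit of the averages applied to it), so it coincides with the $E$ of (viii).
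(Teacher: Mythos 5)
Your proposal is correct and follows essentially the same route as the paper, which gives no detailed argument of its own but states that the proof is a minor modification of Abadie--Dykema's Theorem 3.2 with conditional expectations replaced by norm-one (equivalently positive, unital) projections onto the operator system $A^{\alpha}$ --- precisely the adaptation you describe, including the use of the left-averaging property together with unimodularity to show the limit of the weighted averages lands in $A^{\alpha}$, and the invariance argument for uniqueness in (vii). The only small caveat is that (iii)$\Rightarrow$(iv) is not literally trivial (density of $A^{\alpha}+\mathrm{span}\{a-\alpha_{g}(a):g\in G,\ a\in A\}$ does not by itself yield $A=A^{\alpha}+\overline{\mathrm{span}}\{a-\alpha_{g}(a):g\in G,\ a\in A\}$, since a sum of subspaces need not be closed), but your own cycle already supplies the missing step: (iii)$\Rightarrow$(ii) by uniform boundedness of the averaging operators, and then $a=Ea+(a-Ea)$ with $Ea\in A^{\alpha}$ and $a-Ea$ a norm limit of averages of coboundaries, hence in the closed span.
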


The above theorem extends the results of \cite{AD,AM} for general
weights. Moreover, it has certain corollaries related to
multiparameter dynamical systems.

 We conclude this section by considering a weighted ergodic theorem
for disjoint systems, generalizing \cite[Theorem 3.3]{DS}. As with
the theorem above, this again relates to relative ergodic
properties, but unlike the theorem above the space relative to which
we work need not be a fixed point space. This result is also related
to the mean ergodic theorem (see for example \cite[Remark
3.11]{DS}). First we require some more definitions.

\begin{definition}
Consider a C*-dynamical system $\left( A,\alpha \right) $. If $B$ is an
$\alpha $-invariant C*-subalgebra of $A$, in other words $\alpha _{g}\left(
B\right) =B$ for all $g\in G$, and $B$ contains the unit of $A$, then we can
define $\beta _{g}:=\alpha _{g}|_{B}$ to obtain a C*-dynamical system
$\left( B,\beta \right)$ called a \emph{factor} of $\left( A,\alpha \right)$.
\end{definition}

\begin{definition}
Let $\left( A,\alpha \right) $ be a C*-dynamical system with an
invariant state $\mu$. Then we say that $\mathbf{A}
=\left( A,\alpha ,\mu \right) $ is a \emph{state preserving C*-dynamical
system}.
\end{definition}

We now generalize definitions from \cite{DS} regarding joinings to the
current case (in \cite{DS} only $\ast $-automorphism were considered, not
unital completely positive maps in general). The definitions are in fact of
exactly the same form, but now stated for the definition above. Note that
$\otimes _{m}$ denotes the maximal C*-algebraic tensor product. Given two
C*-dynamical systems $\left( A,\alpha \right) $ and $\left( B,\beta \right) $
with actions of the same semigroup $G$, and with every $\alpha _{g}$ and
$\beta _{g}$ completely positive, we define $\alpha \otimes _{m}\beta $ by
$\left( \alpha \otimes _{m}\beta \right) _{g}:=\alpha _{g}\otimes _{m}\beta
_{g}$, which is also completely positive, for all $g\in G$, to obtain the
C*-dynamical system $\left( A\otimes _{m}B,\alpha \otimes _{m}\beta \right) $.

\begin{definition}
Let $\mathbf{A}=\left( A,\alpha,\mu\right) $ and $\mathbf{B}=\left(
B,\beta,\nu\right) $ be state preserving C*-dynamical systems. A
\emph{joining} of $\mathbf{A}$ and $\mathbf{B}$ is an invariant state $\omega$ of
$\left( A\otimes_{m}B,\alpha\otimes_{m}\beta\right) $ such that
$\omega\left(a\otimes1\right) =\mu(a)$ and $\omega\left( 1\otimes b\right) =\nu(b)$ for
all $a\in A$ and $b\in B$. The set of all joinings of $\mathbf{A}$ and
$\mathbf{B}$ is denoted by $J(\mathbf{A},\mathbf{B})$. Consider a factor
$\left(R,\rho\right)$ of $\left( A\otimes_{m}B,\alpha\otimes _{m}\beta\right)$,
and a $\rho$-invariant state $\psi$ on $R$ which has at least one extension
to a joining of $\mathbf{A}$ and $\mathbf{B}$. So we obtain a state
preserving C*-dynamical system $\mathbf{R}=\left( R,\rho ,\psi\right) $.
Denote by $J_{\mathbf{R}}\left( \mathbf{A},\mathbf{B}\right) $ the subset of
elements $\omega$ of $J\left( \mathbf{A},\mathbf{B}\right) $ such that $%
\omega|_{R}=\psi$. If $J_{\mathbf{R}}\left( \mathbf{A},\mathbf{B}\right) $
contains exactly one element, then we say that $\mathbf{A}$ and $\mathbf{B}$
are \emph{disjoint relative to} $\mathbf{R}$.
\end{definition}

Note that in this definition we are working relative to $R$, or more precisely
$\mathbf{R}$, in keeping with the main theme of the paper. In particular,
$\mathbf{A}$ and $\mathbf{B}$ being disjoint relative to $\mathbf{R}$,
is an ergodic property of the pair of systems $\mathbf{A}$ and $\mathbf{B}$
leading to the ergodic theorem below. See \cite{DS} for a more concrete
discussion involving a relative version of weak mixing versus compactness
as well as examples.

Below, the notation $\mathbf{A}$, $\mathbf{B}$ and $\mathbf{R}$ will refer
to triples $\left( A,\alpha ,\mu \right) $, $\left( B,\beta ,\nu \right) $
and $\left( R,\rho ,\psi \right) $ respectively. We also need the following:

\begin{definition}
Let $A$ and $B$ be unital C*-algebras with states $\mu $ and $\nu $
respectively. A \emph{coupling} of the pairs $\left( A,\mu \right) $ and
$\left( B,\nu \right) $ is a state $\kappa $ on $A\otimes _{m}B$ such that
$\kappa \left( a\otimes 1\right) =\mu (a)$ and $\kappa \left( 1\otimes
b\right) =\nu (b)$ for all $a\in A$ and $b\in B$. If furthermore $\psi $ is
a state on a C*-subalgebra $R$ of $A\otimes _{m}B$ such that $\kappa
|_{R}=\psi $, then we call $\kappa $ a \emph{coupling} of $\left( A,\mu
\right) $ and $\left( B,\nu \right)$ \emph{relative to}
$\left(R,\psi\right)$.
\end{definition}

\begin{theorem}
Let $G$ be a topological semigroup with a right invariant measure $\rho $ on
its Borel $\sigma $-algebra, and let $\left( f_{\iota }\right) _{\iota \in
I} $ be an averaging net for $\left( G,\mathbb{C}\right) $. Let $\mathbf{A}$
and $\mathbf{B}$ be state preserving C*-dynamical systems (for actions of $G$
) which are disjoint relative to $\mathbf{R}$. Let $\left( \kappa
_{\iota }\right) _{\iota \in I}$ be a net of couplings of $\left( A,\mu
\right) $ and $\left( B,\nu \right)$ relative to $\left( R,\psi \right)
$. Assume that for every $\iota \in I$ the function
$G\rightarrow \mathbb{C}:g\mapsto \kappa _{\iota }\circ \left( \alpha
_{g}\otimes _{m}\beta _{g}\right) (a\otimes b)$ is measurable for all $a\in
A $ and $b\in B$. Then
\begin{equation}
\lim_{\iota }\frac{1}{\int f_{\iota }d\rho }\int f_{\iota }(g)\kappa _{\iota
}\left( \alpha _{g}\otimes _{m}\beta _{g}(c)\right) dg=\omega (c)  \tag{2.1}
\end{equation}%
for all $c\in A\otimes _{m}B$, where $\omega $ is the unique element of
$J_{\mathbf{R}}\left( \mathbf{A},\mathbf{B}\right) $.
\end{theorem}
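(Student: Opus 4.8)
The plan is to run a weak* compactness argument on the state space of $A\otimes_m B$. For each $\iota$ define the averaged functional
\[
\omega_\iota(c):=\frac{1}{\int f_\iota d\rho}\int f_\iota(g)\,\kappa_\iota\big((\alpha_g\otimes_m\beta_g)(c)\big)\,dg,\qquad c\in A\otimes_m B.
\]
Since $\alpha_g\otimes_m\beta_g$ is unital completely positive and $\kappa_\iota$ is a state, each $g\mapsto\kappa_\iota\circ(\alpha_g\otimes_m\beta_g)$ is a state, and as $f_\iota/\!\int f_\iota$ is a probability density, $\omega_\iota$ is again a state: positivity and $\omega_\iota(1)=1$ are immediate, while measurability for general $c$ follows from the hypothesis on elements $a\otimes b$ together with $\|\kappa_\iota\circ(\alpha_g\otimes_m\beta_g)\|=1$ and a uniform-limit argument. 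The state space is weak* compact, so $(\omega_\iota)$ has cluster points; I would aim to show that \emph{every} weak* cluster point lies in $J_{\mathbf R}(\mathbf A,\mathbf B)$. Since that set is a singleton $\{\omega\}$ by disjointness relative to $\mathbf R$, all cluster points then coincide with $\omega$, and a net in a compact space with a unique cluster point converges to it, which is exactly (2.1).

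First I would dispose of the three ``static'' membership conditions, each of which in fact holds exactly for every $\omega_\iota$ and hence passes to any cluster point. For the marginals, unitality of $\alpha_g,\beta_g$ together with the coupling identities and invariance of $\mu,\nu$ give $\omega_\iota(a\otimes1)=\frac{1}{\int f_\iota d\rho}\int f_\iota(g)\mu(\alpha_g(a))\,dg=\mu(a)$ and likewise $\omega_\iota(1\otimes b)=\nu(b)$. For the relative condition, since $(R,\rho)$ is a factor we have $(\alpha_g\otimes_m\beta_g)(r)=\rho_g(r)\in R$ for $r\in R$, so $\kappa_\iota((\alpha_g\otimes_m\beta_g)(r))=\psi(\rho_g(r))=\psi(r)$ using $\kappa_\iota|_R=\psi$ and $\rho$-invariance of $\psi$; thus $\omega_\iota|_R=\psi$. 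All three are therefore inherited by any cluster point with no use of the averaging structure.

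The hard part will be invariance of the cluster points. Using that $g\mapsto\alpha_g$ and $g\mapsto\beta_g$ are semigroup homomorphisms, $(\alpha_g\otimes_m\beta_g)\circ(\alpha_h\otimes_m\beta_h)=\alpha_{gh}\otimes_m\beta_{gh}$, so with $F_\iota(g):=\kappa_\iota((\alpha_g\otimes_m\beta_g)(c))$ one has
\[
\omega_\iota(c)-\omega_\iota\big((\alpha_h\otimes_m\beta_h)(c)\big)=\frac{1}{\int f_\iota d\rho}\int f_\iota(g)\big[F_\iota(g)-F_\iota(gh)\big]\,dg.
\]
For a single fixed coupling, $F_\iota=F$ is a fixed bounded Borel function and this tends to $0$ directly from Definition 2.1, forcing every cluster point to satisfy $\omega\circ(\alpha_h\otimes_m\beta_h)=\omega$. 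The genuine obstacle is the $\iota$-dependence of $F_\iota$ induced by the \emph{net} of couplings, since Definition 2.1 is phrased for a single fixed scalar function. My plan to overcome it is to factor out the contraction: writing the displayed average as $\kappa_\iota(v_\iota(h))$ with
\[
v_\iota(h):=\frac{1}{\int f_\iota d\rho}\int f_\iota(g)\big[(\alpha_g\otimes_m\beta_g)(c)-(\alpha_{gh}\otimes_m\beta_{gh})(c)\big]\,dg
\]
the averaged difference of the $A\otimes_m B$-valued orbit map, one gets $|\kappa_\iota(v_\iota(h))|\le\|v_\iota(h)\|$ because $\|\kappa_\iota\|=1$. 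It then suffices that $\|v_\iota(h)\|\to0$, i.e. that $(f_\iota)$ averages the orbit map $g\mapsto(\alpha_g\otimes_m\beta_g)(c)$ at the level of the Banach space $A\otimes_m B$; this norm control (and the strong measurability of the orbit map needed to make the Bochner integral $v_\iota(h)$ meaningful) is precisely the delicate point, and it is automatic when the averaging net is of the Banach-space-independent type supplied by Propositions 2.2--2.3. Once $\|v_\iota(h)\|\to0$ is secured, every cluster point is $(\alpha\otimes_m\beta)$-invariant, hence a joining restricting to $\psi$ on $R$; disjointness relative to $\mathbf R$ identifies it as $\omega$, and compactness upgrades ``unique cluster point'' to convergence of the entire net, establishing (2.1).
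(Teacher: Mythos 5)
Your proposal follows the same route as the paper's proof: form the averaged states $\omega_{\iota}$, invoke weak* compactness of the state space, show that every cluster point lies in $J_{\mathbf{R}}\left(\mathbf{A},\mathbf{B}\right)$ by checking the marginal conditions, the invariance, and the restriction to $R$, and then upgrade ``unique cluster point'' to convergence of the net. The marginal and restriction computations are exactly those in the paper (the paper likewise observes that $\omega_{\iota}|_{R}=\psi$ holds exactly, not just in the limit), and your closing compactness argument matches the paper's.

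The one point of divergence is the invariance step, and there your diagnosis is sharper than your cure. The paper asserts its equation (2.2), namely $\lim_{\iota}\left[\omega_{\iota}(c)-\omega_{\iota}\circ\left(\alpha_{h}\otimes_{m}\beta_{h}\right)(c)\right]=0$, simply ``since $\left(f_{\iota}\right)$ is an averaging net'', i.e.\ it applies Definition 2.1 to $F_{\iota}(g)=\kappa_{\iota}\left(\left(\alpha_{g}\otimes_{m}\beta_{g}\right)(c)\right)$ even though this function varies with $\iota$ --- precisely the issue you isolate. However, your proposed repair via $\left|\kappa_{\iota}\left(v_{\iota}(h)\right)\right|\leq\left\|v_{\iota}(h)\right\|$ is not licensed by the hypotheses: the theorem only assumes $\left(f_{\iota}\right)$ is an averaging net for $\left(G,\mathbb{C}\right)$, and the orbit map $g\mapsto\left(\alpha_{g}\otimes_{m}\beta_{g}\right)(c)$ is not assumed strongly measurable, so the Bochner integral $v_{\iota}(h)$ need not even exist, let alone tend to $0$ in norm. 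A fix that stays within the scalar framework is to note that the family $\left\{F_{\iota}\right\}$ is uniformly bounded by $\left\|c\right\|$ and that for the nets produced by Propositions 2.2 and 2.3 the expression in Definition 2.1 is dominated by quantities such as $\left\|F\right\|_{\infty}\,\rho\left(\Lambda_{\iota}\bigtriangleup\left(\Lambda_{\iota}h\right)\right)/\rho\left(\Lambda_{\iota}\right)$, which go to $0$ uniformly over bounded families of $F$; alternatively, for a constant net $\kappa_{\iota}=\kappa$ the definition applies verbatim. So your argument is correct modulo the same point the paper leaves implicit, and your explicit flagging of that point is the only genuine difference between the two proofs.
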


\begin{proof}
The proof follows the same plan as the proof of \cite[Theorem 3.3]{DS}.

>From Lebesgue's dominated convergence theorem it follows that $g\mapsto
f_\iota (g)\kappa_{n}\circ\left( \alpha_{g}\otimes_{m}\beta_{g}\right) (c)$ is
integrable on $G$ for all $c\in A\otimes_{m}B$, which means that
the integrals in (2.1) indeed exist.

We define a net of states $\left( \omega _{\iota }\right) _{\iota \in I}$ on
$A\otimes _{m}B$ by
\begin{equation*}
\omega _{\iota }(c):=\frac{1}{\int f_{\iota }d\rho }\int f_{\iota }(g)\kappa
_{\iota }\left( \alpha _{g}\otimes _{m}\beta _{g}(c)\right) dg
\end{equation*}%
which then has a cluster point $\omega ^{\prime }$ in the weak* topology in
the compact set of all states on $A\otimes _{m}B$. Since $\kappa _{\iota }$
is a coupling, so is $\omega _{\iota }$, from which one can easily show that
$\omega ^{\prime }$ is also a coupling of $\left( A,\mu \right) $ and $
\left( B,\nu \right) $.

For any $h\in G$ and $c\in A\otimes _{m}B$
\begin{equation}
\lim_{\iota }\left[ \omega _{\iota }(c)-\omega _{\iota }\circ \left( \alpha
_{h}\otimes _{m}\beta _{h}\right) (c)\right] =0  \tag{2.2}
\end{equation}
since $\left( f_{\iota }\right) $ is an averaging net. Now, for an arbitrary
$\varepsilon >0$, consider the following weak* neighbourhood of $\omega
^{\prime }$:
\begin{equation*}
N:=\left\{ \theta \in S:\left| \theta (c)-\omega ^{\prime }(c)\right|
<\varepsilon \text{ and }\left| \theta \left( \alpha _{h}\otimes _{m}\beta
_{h}(c)\right) -\omega ^{\prime }\left( \alpha _{h}\otimes _{m}\beta
_{h}(c)\right) \right| <\varepsilon \right\}
\end{equation*}
By (2.2) we know that there is an $\iota _{0}\in I$ such that
\begin{equation*}
\left| \omega _{\iota }(c)-\omega _{\iota }\circ \left( \alpha _{h}\otimes
_{m}\beta _{h}\right) (c)\right| <\varepsilon
\end{equation*}
for $\iota >\iota _{0}$, but since $\omega ^{\prime }$ is a cluster point of
$\left( \omega _{\iota }\right) $, there is an $\iota _{1}>\iota _{0}$ such
that $\omega _{\iota _{1}}\in N$. Combining these facts,
\begin{equation*}
\left| \omega ^{\prime }(c)-\omega ^{\prime }\circ \left( \alpha _{h}\otimes
_{m}\beta _{h}\right) (c)\right| <3\varepsilon
\end{equation*}
which means that
\begin{equation*}
\omega ^{\prime }\circ \left( \alpha _{h}\otimes _{m}\beta _{h}\right)
(c)=\omega ^{\prime }(c)
\end{equation*}
and so we have shown that $\omega ^{\prime }\in J\left( \mathbf{A},\mathbf{B}
\right) $.

If $c\in R$, then $\alpha_{h}\otimes_{m}\beta_{h}(c)\in R$, since
$\left(R,\rho\right) $ is a factor of $\left(A\otimes_{m}B,\alpha\otimes_{m}\beta\right) $,
so from the definition of $\omega_{\iota}$ it follows that $\omega_{\iota}(c)=\psi(c)$
and therefore $\omega(c)=\psi(c)$. This proves that
$\omega^{\prime}\in J_{\mathbf{R}}\left( \mathbf{A},\mathbf{B}\right) =\left\{ \omega\right\} $,
in other words the net $\left( \omega_{\iota }\right) $ has $\omega$ as its unique
cluster point, therefore w*-$\lim _{\iota}\omega_{\iota}=\omega$, so in
particular $\lim_{\iota}\omega_{\iota }(c)=\omega(c)$ for all $c\in
A\otimes_{m}B$, as required.
\end{proof}

One can apply this theorem to pairs of disjoint W*-dynamical systems to
obtain the corresponding weighted versions of \cite[Theorem 3.8 and
Corollary 3.9]{DS}.

\section{$(E,S)$-mixing}

In \cite{Fid2} the following type of mixing condition (inspired by
condition (vi) in Theorem 2.15) was studied for C*-dynamical systems
$\left(A,\alpha\right)$ with an action of
$\mathbb{N=}\left\{1,2,3,...\right\}$:
\begin{equation*}
\lim_{n\rightarrow \infty }\varphi \left( \alpha ^{n}(a)\right)
=\varphi \left( Ea\right)
\end{equation*}
for all states $\varphi $ on the C*-algebra, where $E:A\rightarrow
A$ is some linear map. The fact that $\alpha $ does not act on $Ea$,
implicitly means that we are thinking in terms of the fixed point
operator system of $\alpha $, in particular $E$ will typically be a
projection onto the fixed point operator system, i.e. we are
considering an ergodic property relative to the fixed point operator
system.

However, in classical ergodic theory it is also natural to consider
ergodic properties relative to spaces other than the fixed point
space. One example of such a study in the case of mixing in
classical ergodic theory can be found in \cite{Rud}, although this
was for a single invariant measure, rather than the form inspired by
relative unique ergodicity that we are interested in here. And in
the last part of Section 2 regarding disjoint systems we already saw
a case of this in the noncommutative theory.

We therefore look at the following more general ergodic property:

\begin{definition}
Let $\left(A,\alpha \right)$ be a C*-dynamical system for an action
of the semigroup $\mathbb{N}$. Let $E:A\rightarrow A$ be linear, and
let $S$ be a set of bounded linear functionals on $A$. Then
$\left(A,\alpha \right)$ is said to be $\left(E,S\right)
$\emph{-mixing} if
\begin{equation*}
\lim_{n\rightarrow \infty }\varphi \left( \alpha ^{n}(a-Ea)\right)
=0
\end{equation*}
for all $a\in A$ and all $\varphi \in S$.
\end{definition}

Note that the point here is that one can now look for examples of
this property where $Ea$ need not be fixed under $\alpha $, and one
can consider certain classes of states rather than all states.

We consider a class of examples similar to those studied in
\cite[Section 3] {D2}, except that here we work in terms of reduced
group C*-algebras rather than group von Neumann algebras. (It is
indeed closely related to examples considered in \cite{Fid2} and
\cite{FM}.)

Let $\Gamma $ be any group (to which we assign the discrete
topology). Let $\lambda $ be the left regular representation of
$\Gamma $ on the Hilbert space $H:=L^{2}(\Gamma )$ defined in terms
of the counting measure on $\Gamma $, i.e. $\left[ \lambda
(g)f\right] (h):=f\left( g^{-1}h\right) $ for all $g,h\in \Gamma $
and $f\in H$. Let $A:=C_{r}^{\ast }\left( \Gamma \right) $ be the
reduced group C*-algebra, i.e. the C*-subalgebra of $B(H)$ generated
by $\left\{ \lambda \left( g\right) :g\in \Gamma \right\} $. Given a
group automorphism $T:\Gamma \rightarrow \Gamma $, we can define a
$\ast $-automorphism $\alpha $ of $A$ such that $\alpha \left(
\lambda \left( g\right) \right) =\lambda \left( Tg\right) $ (see
\cite[Section 3]{D2} for more details), giving us a C*-dynamical
system for an action of the group $\mathbb{Z}$, which we will call
the \emph{dual system of} $\left( \Gamma,T\right) $.

Given this situation, we define
\begin{equation*}
F:=\left\{ g\in \Gamma :T^{\mathbb{N}}g\text{ is finite}\right\}
\end{equation*}
where $T^{\mathbb{N}}g:=\left\{ T^{n}g:n\in \mathbb{N}\right\} $
(one could use $\mathbb{Z}$ instead of $\mathbb{N}$; it makes no
difference). Then $F$ is a subgroup of $\Gamma $ consisting of all
the elements with finite orbits under $T$. Let $B$ be the
C*-subalgebra of $A$ generated by $\left\{ \lambda \left( g\right)
:g\in F\right\} $. We call $B$ the \emph{finite orbit algebra of}
$\left( A,\alpha \right) $. Note that orbits of all elements of $B$
are not necessarily finite under $\alpha $; the name ``finite
orbit'' is used simply because $B$ originates from elements of
$\Gamma $ with finite orbits.

We are going to consider states $\varphi$ on $A$ above given by
density matrices, i.e. states $\varphi$ given by $\varphi(a)=$
Tr$\left( \rho a\right) $ where Tr is the usual trace on $B(H)$, and
$\rho$ is a density matrix on $H$, i.e. a trace-class operator
$\rho\geq0$ in $B(H)$ such that Tr $\left( \rho\right) =1$.

The notation introduced in the last three paragraphs will remain
fixed for the rest of this section. Furthermore we define $\delta
_{g}\in H$ by
\begin{equation*}
\delta _{g}\left( h\right) :=\left\{
\begin{array}{c}
1\text{ when }h=g \\
0\text{ otherwise}
\end{array}
\right.
\end{equation*}
for all $g,h\in \Gamma $.

Note that it is easy to obtain various systems of this sort. For
example we could take $\Gamma$ to be the free group on some set of
symbols (and this set can be arbitrary), and $T$ can then be
obtained from any bijection of the set of symbols to itself. In
particular we can obtain examples where the finite orbit algebra is
strictly larger than the fixed point algebra, and yet not the whole
of $A$.

Such dual systems provide us with examples of $\left( E,S\right)
$-strong mixing (and hence also of weaker properties like unique
$\left(E,S\right)$-ergodicity and unique $\left(E,S\right)$-weak
mixing in the next section):

\begin{theorem}
Let $T:\Gamma \rightarrow \Gamma $ be any automorphism of an
arbitrary group $\Gamma $. Let $\left( A,\alpha \right) $ be the
dual system of $\left(\Gamma ,T\right) $. Then there exists a
conditional expectation $E:A\rightarrow B$ of $A$ onto the finite
orbit algebra $B$ of $\left(A,\alpha \right) $ such that
\begin{equation*}
E\lambda \left( g\right) =\left\{
\begin{array}{c}
\lambda \left( g\right) \text{ when }g\in F \\
0\text{ otherwise}
\end{array}
\right.
\end{equation*}
for all $g\in \Gamma $. Furthermore, $\left( A,\alpha \right) $ is
$\left(E,S\right) $-mixing, where $S$ is the set of all states on
$A$ given by density matrices on $H$.
\end{theorem}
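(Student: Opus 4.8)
The theorem has two parts. First, I need to construct a conditional expectation $E : A \to B$ onto the finite orbit algebra with the prescribed action on generators. Second, I need to verify that $(A,\alpha)$ is $(E,S)$-mixing for $S$ the set of density-matrix states, i.e. that $\lim_{n\to\infty}\varphi(\alpha^n(a-Ea))=0$ for every $a \in A$ and every such $\varphi$.

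Let me think about the structure here.

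The reduced group C*-algebra $A = C_r^*(\Gamma)$ acts on $H = \ell^2(\Gamma)$. Elements can be thought of via the canonical trace $\tau$ where $\tau(\lambda(g)) = \delta_{g,e}$ (equals $\langle \delta_e, \lambda(g)\delta_e\rangle$). Actually more useful: any $a \in A \subseteq B(H)$ has "Fourier coefficients" $a(g) = \langle \delta_e, \lambda(g)^* a \delta_e\rangle$, or equivalently $a\delta_e = \sum_g a(g)\delta_g$ where the sum is in $\ell^2$.

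**Constructing $E$.**

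The finite orbit subgroup $F = \{g : T^{\mathbb{N}}g \text{ finite}\}$ is $T$-invariant. $B = C_r^*(F)$ sits inside $A$. The natural candidate for $E$ is the canonical conditional expectation associated to the inclusion $C_r^*(F) \subseteq C_r^*(\Gamma)$ coming from the subgroup inclusion $F \le \Gamma$. This is a standard object: in terms of Fourier coefficients, $E$ should just restrict $a$ to its coefficients supported on $F$: $(Ea)(g) = a(g)$ for $g\in F$, and $0$ otherwise. On generators this gives exactly $E\lambda(g) = \lambda(g)$ if $g\in F$, else $0$.

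To show $E$ is well-defined and bounded as a conditional expectation, the cleanest route is the standard one: $H = \ell^2(\Gamma)$ decomposes along cosets of $F$, and $\ell^2(F)$ is a reducing-type subspace giving a projection whose compression implements $E$. Concretely, let $P : \ell^2(\Gamma) \to \ell^2(F)$ be the orthogonal projection; then one checks $E(a) := P a|_{\ell^2(F)}$ lands in $B$ and satisfies the conditional-expectation identities. Since $F$ is a subgroup, $\lambda(h)$ for $h\in F$ commutes appropriately with $P$ in the right way to give $E(b_1 a b_2) = b_1 E(a) b_2$ for $b_1, b_2 \in B$, and $E$ is unital and completely positive with $\|E\|=1$. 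This part is essentially bookkeeping with group-algebra structure; I would cite the general theory of conditional expectations for subgroup inclusions rather than reprove it. The key point to verify is that the image is exactly $B$ and the formula on generators holds, which it does by construction.

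**Verifying $(E,S)$-mixing.**

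For the mixing part, write $a_0 := a - Ea$. This element has Fourier coefficients supported on $\Gamma \setminus F$, i.e. on elements with \emph{infinite} orbits under $T$. A density-matrix state is $\varphi(x) = \mathrm{Tr}(\rho x) = \sum_{i} p_i \langle \xi_i, x\xi_i\rangle$ for a convex combination with $\sum p_i = 1$, $\xi_i \in H$ unit vectors. By a standard approximation and $3\varepsilon$-argument, it suffices to prove $\langle \xi, \alpha^n(a_0)\xi\rangle \to 0$ for $\xi$ in a dense subset (finitely supported vectors) and for $a_0$ a finite linear combination of $\lambda(g)$ with $g \notin F$, using uniform boundedness $\|\alpha^n(a_0)\| \le \|a_0\|$ to control tails. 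So the whole thing reduces to the single generator case.

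**The heart of the matter.**

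For a single $g \notin F$, I compute $\langle \delta_p, \alpha^n(\lambda(g))\delta_q\rangle$. Since $\alpha(\lambda(g)) = \lambda(Tg)$, we get $\alpha^n(\lambda(g)) = \lambda(T^n g)$, and $\langle \delta_p, \lambda(T^n g)\delta_q\rangle = \langle\delta_p, \delta_{(T^n g)q}\rangle = [\,p = (T^n g)q\,]$. For fixed $p, q$, this is nonzero for at most one value of $T^n g$, namely $T^n g = pq^{-1}$. Because $g \notin F$, the orbit $\{T^n g : n \in \mathbb{Z}\}$ is infinite, so $T^n g = pq^{-1}$ can hold for at most finitely many $n$ (in fact at most one). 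Hence for each fixed pair $(p,q)$ the matrix coefficient vanishes for all large $n$. This is the crucial use of the infinite-orbit condition.

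The main obstacle — and where I'd spend care — is \textbf{promoting this pointwise (per-matrix-entry) vanishing to vanishing of the full state $\langle \xi, \alpha^n(\lambda(g))\xi\rangle$ uniformly enough to handle general $\xi$ and then general density matrices.** For a finitely supported $\xi = \sum_{q\in K}c_q \delta_q$ with $K$ finite, $\langle \xi, \lambda(T^n g)\xi\rangle = \sum_{p,q\in K} \bar c_p c_q [\,p = (T^n g) q\,]$ is a finite sum of terms each eventually zero, hence the whole expression is zero for $n$ large. The approximation from finitely supported $\xi$ to general $\xi_i$, and from finitely many $\xi_i$ to the full (possibly infinite) spectral decomposition of $\rho$, is handled by the trace-class / $\ell^1$ summability of the $p_i$ together with the uniform norm bound $\|\alpha^n(\lambda(g))\|=1$. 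I would organize this as: fix $\varepsilon$, truncate $\rho$ to a finite-rank part within $\varepsilon$ in trace norm, approximate each relevant eigenvector by a finitely supported vector, then use eventual exact vanishing on the finite part. Extending from a single generator $\lambda(g)$ to a finite combination, and thence to general $a_0 \in \overline{\mathrm{span}}\{\lambda(g) : g\notin F\}$, requires knowing that $a - Ea$ really is approximable by such finite combinations; this follows because $a_0$ has Fourier support off $F$ and the Fourier/Fejér-type approximation in $C_r^*(\Gamma)$ lets me approximate $a_0$ in norm by finite combinations of $\lambda(g)$, $g\notin F$. Assembling these approximations cleanly is the technical crux, but each ingredient is standard.
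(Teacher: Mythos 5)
Your proposal is correct in substance, and its key analytic step (the matrix coefficient $\langle\delta_p,\lambda(T^ng)\delta_q\rangle$ is nonzero for at most one $n$ because an infinite orbit has no repetitions) is exactly the heart of the paper's argument. Where you genuinely diverge is in the two flanking steps. For the construction of $E$, the paper passes to the group von Neumann algebra $M$, notes that the canonical trace $\omega=\langle\Omega,\cdot\,\Omega\rangle$ has trivial modular group, and invokes Takesaki's theorem to get the unique $\omega$-preserving conditional expectation $D:M\to N$, then restricts to $A$; you instead build $E$ directly as the standard subgroup conditional expectation for $C_r^*(F)\subseteq C_r^*(\Gamma)$ via compression by the projection onto $\ell^2(F)$. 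Both yield the same map; yours is more elementary and self-contained, while the paper's route hands you the formula $(Da)\Omega=Pa\Omega$ for free from modular theory. For the passage from matrix coefficients to density-matrix states, the paper uses the fact that $\Omega$ is cyclic and separating for $M$, so every normal state of $M$ is a vector state $\langle x,\cdot\,x\rangle$, reducing everything to single vectors; you instead decompose $\rho=\sum_ip_i|\xi_i\rangle\langle\xi_i|$ and run a trace-norm truncation plus finite-support approximation. Your version is more hands-on but needs more bookkeeping; the paper's is shorter but leans on a standard-form theorem. One caution: your appeal to a ``Fourier/F\'ej\'er-type approximation'' to see that $a-Ea$ is a norm limit of finite combinations of $\lambda(g)$ with $g\notin F$ is the wrong justification --- norm-convergent Fej\'er summation is not available in $C_r^*(\Gamma)$ for general (e.g.\ non-amenable) $\Gamma$. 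The correct and much simpler argument is to approximate $a$ in norm by finite combinations $a_k$ from the group algebra and apply the bounded map $I-E$, which kills exactly the coefficients on $F$; with that substitution your proof is complete.
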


\begin{proof}
We apply Tomita-Takesaki theory to the corresponding group von
Neumann algebra to prove this result.

(i) First we show the existence of $E$.

Let $M$ be the group von Neumann algebra of $\Gamma $, and let $N$
be the von Neumann algebra generated by $B$, in other words by
$\left\{ \lambda \left( g\right) :g\in F\right\} $. We also use the
notation $\Omega :=\delta _{1}$ where $1$ denotes the identity
element of $\Gamma $. Define a state $\omega $ on $M$ by
\begin{equation*}
\omega \left( a\right) :=\left\langle \Omega ,a\Omega \right\rangle
\end{equation*}
for all $a\in M$. It is straightforward to show that $\Omega $ is
cyclic and separating for $M$, and that $\omega $ is a trace. In
particular this means that the modular group of $\omega $ is
trivial, i.e. $\sigma _{t}^{\omega }=$ id$_{M}$ for all $t\in
\mathbb{R}$, and therefore $\sigma _{t}^{\omega }\left( N\right) =N$
which means that there is a unique conditional expectation
$D:M\rightarrow N$ such that $\omega \circ D=\omega $.

This conditional expectation is given by $\left( Da\right) \Omega
=Pa\Omega $ where $P$ is the projection of $H$ onto
$\overline{N\Omega }$ (see for example \cite[Section 10.2]{St}).
Since $\lambda \left( g\right) \Omega =\delta _{g}$, we have
$\overline{N\Omega }=\overline{\text{span}\left\{\delta _{g}:g\in
F\right\} }$. It therefore follows from $\left( D\lambda \left(
g\right) \right) \Omega =P\delta _{g}$ that
\begin{equation*}
D\lambda \left( g\right) =\left\{
\begin{array}{c}
\lambda \left( g\right) \text{ when }g\in F \\
0\text{ otherwise}
\end{array}
\right.
\end{equation*}
for all $g\in \Gamma $. From this it follows that we obtain a well
defined mapping
\begin{equation*}
E:=D|_{A}:A\rightarrow B
\end{equation*}
which is the required conditional expectation of $A$ onto $B$.

(ii) Now we show that $\left( A,\alpha\right) $ is $\left(
E,S\right) $-strongly mixing.

For any $f,g,h\in \Gamma$ we see that
\begin{equation*}
\left\langle \delta _{f},\alpha ^{n}\left( \lambda \left( g\right)
-E\lambda \left( g\right) \right) \delta _{h}\right\rangle =0
\end{equation*}%
for $n$ large enough, since it is zero for $g\in F$, while otherwise
$g$ has an infinite orbit which implies that there is no repetition
in the sequence $\left( T^{n}g\right) _{n\in \mathbb{N}}$, hence
\begin{equation*}
\left\langle \delta _{f},\alpha ^{n}\left( \lambda \left( g\right)
-E\lambda \left( g\right) \right) \delta _{h}\right\rangle
=\left\langle \delta _{f},\lambda \left( T^{n}g\right) \delta
_{h}\right\rangle =\left\langle \delta _{fh^{-1}},\delta
_{T^{n}g}\right\rangle
\end{equation*}
is non-zero for at most one value of $n$ (namely where
$T^{n}g=fh^{-1}$).
>From this it is straightforward to show that%
\begin{equation*}
\lim_{n\rightarrow \infty }\left\langle x,\alpha ^{n}\left(
a-Ea\right) x\right\rangle =0
\end{equation*}
for all $a\in A$ and all $x\in H$.

However, since $\Omega$ is cyclic and separating for $M$, every
normal state $\varphi$ on $M$ is given by $\varphi\left( a\right)
=\left\langle x,ax\right\rangle $ for some $x\in H$ (see for example
\cite[Theorem 2.5.31]{BR}), but the normal states on $M$ are exactly
the states given by density matrices on $H$ (see for example
\cite[Theorem 2.4.21]{BR}). This means that all the states on $M$
(and therefore on $A$) given by density matrices are already given
by the states of the form $\varphi\left( a\right)=\left\langle
x,ax\right\rangle $ with $x\in H$, proving the theorem.
\end{proof}

Note that one cannot replace the finite orbit algebra by the fixed
point algebra in this theorem, unless they happen to be the same,
i.e. when $T$ is such that all the orbits are either singletons or
infinite. However, the fixed point algebra is nevertheless still
important for other ergodic properties, even when the fixed point
algebra and finite orbit algebra are not equal, as the next result
illustrates. This result follows from a minor variation on arguments
from \cite{AD} using Haagerup's inequality (see \cite[Lemma 1.4]{Ha}
for the origins of this inequality):

\begin{proposition}
Let $\Gamma $ be the free group on a countably infinite set of
symbols $L$. Let $T$ be the automorphism of $\Gamma $ induced by any
bijection $L\rightarrow L$. Then the dual system $\left( A,\alpha
\right) $ of $\left(\Gamma ,T\right) $ is uniquely ergodic relative
to its fixed point algebra.
\end{proposition}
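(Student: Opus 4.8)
The plan is to establish unique ergodicity relative to $A^{\alpha}$ by verifying condition (ii) of Theorem \ref{UE-A}. Here $G=\mathbb{Z}$ is abelian, hence unimodular for the counting measure, and the Ces\`aro weights $f_{N}:=\chi_{\{0,1,\dots,N-1\}}$ form a F\o lner net and therefore, by Proposition 2.2, a right and left averaging net. Thus it suffices to prove that
\[
A_{N}(a):=\frac{1}{N}\sum_{n=0}^{N-1}\alpha^{n}(a)
\]
converges in norm as $N\to\infty$ for every $a\in A$. Since each $\alpha^{n}$ is a $\ast$-automorphism, hence an isometry, one has $\|A_{N}\|\le 1$ for all $N$; consequently it is enough to prove convergence of $\left(A_{N}(a)\right)$ on the dense $\ast$-subalgebra $\mathrm{span}\{\lambda(g):g\in\Gamma\}$ and then extend by the usual approximation argument using this uniform bound.

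By linearity the question reduces to a single $a=\lambda(g)$, for which $A_{N}(\lambda(g))=\frac{1}{N}\sum_{n=0}^{N-1}\lambda(T^{n}g)$. The crucial observation is that, because $T$ is induced by a bijection of the free generating set $L$, it carries each reduced word to a reduced word of the same length; hence every element $T^{n}g$ of the orbit lies on the sphere of radius $m:=|g|$ in $\Gamma$. If $g$ has a finite $T$-orbit of cardinality $p$, then $A_{N}(\lambda(g))$ converges in norm to the orbit average $\frac{1}{p}\sum_{h}\lambda(h)$, which is manifestly fixed by $\alpha$ and so lies in $A^{\alpha}$. If instead $g$ has an infinite orbit, then the $T^{n}g$ are pairwise distinct words all of length $m$, so $A_{N}(\lambda(g))$ is supported on the length-$m$ sphere and, with respect to the canonical trace on $A$, has $L^{2}$-norm $N^{-1/2}$. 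Applying Haagerup's inequality (\cite[Lemma 1.4]{Ha}) on this fixed-length sphere yields
\[
\left\|A_{N}(\lambda(g))\right\|\le (m+1)\left\|A_{N}(\lambda(g))\right\|_{2}=\frac{m+1}{\sqrt{N}}\longrightarrow 0 ,
\]
so $A_{N}(\lambda(g))\to 0$ in norm. In either case the limit exists, and by linearity together with the uniform bound it exists for all $a\in A$, giving condition (ii) and hence (i) of Theorem \ref{UE-A}.

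The heart of the matter, and the step I expect to carry the whole argument, is the length-preservation of $T$: it is exactly this that confines each orbit to a single word-length sphere and thereby makes Haagerup's inequality applicable with a constant $m+1$ that is independent of $N$, producing the decisive $N^{-1/2}$ decay of the averages. Without confining each orbit to a fixed sphere one would have no control over the operator norm of $A_{N}(\lambda(g))$, and the $L^{2}$ estimate alone would be useless. The remaining ingredients---that the Ces\`aro net is a two-sided averaging net for $\mathbb{Z}$, the uniform bound $\|A_{N}\|\le 1$, and the passage from the dense subalgebra to all of $A$---are routine; but the density reduction is genuinely needed, precisely because Haagerup's inequality is being applied length by length rather than to general elements of $A$.
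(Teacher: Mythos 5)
Your proof is correct and takes essentially the same route as the paper's: the authors likewise split the generators $\lambda(g)$ according to whether the $T$-orbit of $g$ is finite (periodicity gives convergence of the Ces\`aro means) or infinite (Haagerup's inequality on the fixed-length sphere, following Abadie--Dykema, forces the averages to $0$ in norm), extend by linearity and density, and then invoke condition (ii) of Theorem \ref{UE-A}. You have simply written out in full the details that the paper delegates to the citation of \cite{AD}.
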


\begin{proof}
This follows by applying the same argument as in \cite[Section 2 and
Proposition 3.5]{AD}, using Haagerup's inequality, to the elements
of $\Gamma $ with infinite orbits under $T$, but also noticing that
\begin{equation*}
\lim_{N\rightarrow \infty }\frac{1}{N}\sum_{n=1}^{N}\alpha
^{n}\left( \lambda \left( g\right) \right)
\end{equation*}
exists for all $g\in F$, since the orbits of such $g$ are periodic,
from which we conclude that
\begin{equation*}
\lim_{N\rightarrow \infty }\frac{1}{N}\sum_{n=1}^{N}\alpha
^{n}\left(a\right)
\end{equation*}
exists for all $a\in A$. Then one simply applies \cite[Theorem
3.2]{AD} (or Theorem 2.15 above).
\end{proof}

Note that in the case where there are symbols in $L$ with finite
orbits of length more than $1$, the fixed point algebra differs from
the finite orbit algebra. By the proposition $\left( A,\alpha
\right) $ is nevertheless uniquely ergodic relative to its fixed
point algebra (even though not all the orbits in $\left( \Gamma
,T\right) $ are infinite or singletons), while at the same time it
is $\left( E,S\right) $-mixing as in Theorem 3.2, where $E$ does not
project onto the fixed point algebra, but rather onto the finite
orbit subalgebra.

\section{Unique $\left( E,S\right) $-ergodicity and unique
$\left( E,S\right) $-weak mixing}

 Let $(A,\a)$ be a $C^*$-dynamical
system, let $E:A\to A$ be a bounded linear operator, and let $S$
be a set of bounded linear functionals on $A$. In what follows, by $S(A)$ we denote the
set of all states defined on $A$.

\begin{definition}
A $C^*$-dynamical system $\left(  A,\alpha\right)  $ for the action
of a topological semigroup $G$ is called $S$\emph{-weakly amenable},
for a set $S\subset A^{\ast}$, if the following holds: There is a
right invariant measure $\rho$ on $G$, an averaging net $\left(
f_{\iota}\right)  $ for $\left(  G,\mathbb{C}\right)  $, and
$G\rightarrow\mathbb{C}:g\mapsto \varphi\left(  \alpha_{g}\left(
a\right)  \right)  $ is Borel measurable for every $a\in A$ and
$\varphi\in S$.
\end{definition}

\begin{definition} Let $(A,\a)$ be an $S$-weakly amenable $C^*$-dynamical system, with $G$ unimodular with
respect to the right measure $\rho$ and let $(f_\iota)$ be an
averaging net for $(G,\bc)$. Then $(A,\a)$ is said to be
\begin{enumerate}
\item[(i)]
\textit{ unique $(E,S)$-ergodic} w.r.t. $(f_\iota)$ if one has
\begin{equation} \label{mpp1}
\lim_{\iota}\frac{1}{\int f_\iota}\int f_\iota(g)
\f(\a_g(x-E(x)))dg=0\,, \ \ x\in A\,,\f\in S\,;
\end{equation}

\item[(ii)]
\textit{ unique $(E,S)$-weakly mixing} w.r.t. $(f_\iota)$ if one
has
\begin{equation} \label{mpp1}
\lim_{\iota}\frac{1}{\int f_\iota}\int f_\iota(g)
\big|\f(\a_g(x-E(x)))\big|dg= 0\,, \ \ x\in A\,,\f\in S\,.
\end{equation}
\end{enumerate}
 \end{definition}

\begin{remark}  Note that if one takes $S=S(A)$ and $E$ is invariant w.r.t. $\a$, i.e.
$\a_gE=E$ for all $g\in G$, then unique $(E,S)$-ergodicity (resp.
unique $(E,S)$-weak mixing ) coincides with unique ergodicity
relative to $A^\a$ (resp. unique $E$-weak mixing
\cite{FM},\cite{M3}) (see Theorem \ref{UE-A}). Note that in
\cite{FM1} other more complicated unique $E$-ergodic and unique
mixing $C^*$-dynamical systems arising from free probability have
been studied.
\end{remark}

Let us provide an example of unique $(E,S)$-weak mixing dynamical
system.

{\sc Example.} We consider $A=\mathbb{C}^4$, and $G=\bn$. Let
$$
\a= \left(
\begin{array}{cccc}
p & 1-p & 0 & 0\\
0 & 0 & 1  & 0 \\
0 & 1 & 0  & 0\\
0& 0& 0& 1 \\
\end{array}
\right).
$$

One can see that $\a$ has $\{p,1,-1,1\}$ eigenvalues with
corresponding eigenvectors
$\{\mathbf{e},\id,\mathbf{k},\mathbf{s}\}$, where
$$
{\mathbf{e}}=(1,0,0,0), \ \id=(1,1,1,1)\ \
{\mathbf{k}}=((p-1)/(p+1),1,-1,0),\ \ {\mathbf{s}}=(0,0,0,1).
$$
Due to independence the above vectors, any vector $\mathbf{x}$ in
$\bc$ can be represented by
$$
{\mathbf{x}}=\l\mathbf{e}+\m\id+\n\mathbf{k}+\t\mathbf{s}.
$$
Hence, one finds
$$
\a^n({\mathbf{x}})=\l
p^n{\mathbf{e}}+\m\id+\t\mathbf{s}+(-1)^n\n\mathbf{k}.
$$

>From this we immediately find that $\a$ is uniquely $E_\a$-ergodic,
where $E_\a$ is a projection from $\bc^4$ onto fixed point space
$\{\m\id+\t\mathbf{s}: \m,\t\in\bc\}$. Note that $\a$ is not
uniquely $E_\a$-weak mixing.

We let
$$
S=\{(0,x,x,y): \ x,y\geq 0, \ x+y=1\}.
$$

Define a mapping $E:\bc^4\to L$, where
$L=\{\m\id+\n\mathbf{k}\t\mathbf{s}: \m,\n,\t\in\bc\}$, by
$$
E({\mathbf{x}})=\m\id+\t\mathbf{s}+\n\mathbf{k}.
$$
Note that $E$ is a projection, not onto $Fix(\a)$.

 For every state
$\f\in S$ one can compute that
$$
\f(\a^n({\mathbf{x}}))= \m\f(\id)+\t\f({\mathbf{s}}), \ \ \forall
n\in\bn.
$$
Hence $\a$ is uniquely $(E,S)$-weak mixing.\\

\begin{proposition}\label{ch} Let $(A,\a)$ be an $S$-weakly amenable $C^*$-dynamical system with $G$
unimodular with respect to the measure $\rho$, and let $(f_\imath)$
be an averaging net for $(G,\bc)$. Assume that the linear hull of $S$ is
norm-dense in $A^*$ and $\a_gE=E$, for all $g\in G$, then following
conditions are equivalent:
\begin{enumerate}
\item[(i)] $(A, \a)$ is unique $(E,S)$-ergodic;

\item[(ii)]
$(A,\a)$ is unique ergodic relative to $A^\a$.
\end{enumerate}
\end{proposition}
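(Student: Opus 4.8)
The plan is to route both implications through Theorem~\ref{UE-A}, using its condition (vi) as the common pivot. The preliminary move is to unwind unique $(E,S)$-ergodicity under the standing assumption $\a_gE=E$. Because $\a_gE=E$ means $E(x)\in A^\a$ for every $x\in A$, we have $\a_g(E(x))=E(x)$, so the weighted average of $g\mapsto\f(\a_g(E(x)))$ collapses to $\f(E(x))$. Thus unique $(E,S)$-ergodicity is nothing but
\[
\lim_{\iota}\frac{1}{\int f_\iota}\int f_\iota(g)\,\f(\a_g(x))\,dg=\f(E(x))\qquad(x\in A,\ \f\in S),
\]
i.e.\ exactly the convergence in condition (vi) of Theorem~\ref{UE-A}, but tested only against the functionals in $S$ rather than against all of $S(A)$.

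For (i)$\Rightarrow$(ii) I would first promote this convergence from $S$ to all of $A^*$. For each fixed $x$ the functionals $\f\mapsto\frac{1}{\int f_\iota}\int f_\iota(g)\f(\a_g(x))\,dg$ are linear in $\f$ and uniformly bounded by $\|x\|$ (each $\a_g$ is a contraction and $f_\iota\ge 0$), so pointwise convergence on $S$ plus norm-density of the linear hull of $S$ in $A^*$ upgrades to pointwise convergence on all of $A^*$, in particular on every state. I would then verify that $E$ is a positive projection of $A$ onto $A^\a$: its range lies in $A^\a$ by hypothesis; it fixes $A^\a$ pointwise, since for $a\in A^\a$ the averages equal $\f(a)$ identically and so the limit forces $\f(E(a))=\f(a)$ for all states, whence $E(a)=a$; and it is positive because averages of positive elements stay positive. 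This is precisely condition (vi), so Theorem~\ref{UE-A} delivers unique ergodicity relative to $A^\a$.

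For (ii)$\Rightarrow$(i) I would feed Theorem~\ref{UE-A} the other way: unique ergodicity relative to $A^\a$ yields, through (vi)--(viii), an $\a$-invariant positive projection onto $A^\a$ for which the weighted averages converge to $\f(E(x))$ for every $\f\in S(A)$. Since each $\f\in S\subset A^*$ is a linear combination of states and both sides are linear in $\f$, this convergence automatically holds for every $\f\in S$ as well, which is exactly unique $(E,S)$-ergodicity.

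The genuinely delicate point---as opposed to the routine boundedness and linearity manipulations above---is the bookkeeping that keeps the operator $E$ the same across the two conditions. In (i)$\Rightarrow$(ii) the projection is forced to be the given $E$. In (ii)$\Rightarrow$(i), however, Theorem~\ref{UE-A} manufactures its own $\a$-invariant positive projection onto $A^\a$, and one must know this agrees with the prescribed $E$. Here the hypotheses earn their keep: the uniqueness clause of Theorem~\ref{UE-A}(vii) singles out that projection, while norm-density of $\mathrm{span}(S)$ in $A^*$ guarantees that the values $\f(E(x))$, $\f\in S$, already determine $E(x)$, so the two descriptions of $E$ cannot diverge. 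Making this identification airtight is where I would concentrate the effort.
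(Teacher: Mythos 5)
The paper gives no written proof of this proposition---it only defers to the proof of Theorem 2.1 of \cite{FM}---so your proposal must be judged on its own terms, and as written it has two genuine gaps. The first is a hypothesis mismatch: Theorem~\ref{UE-A} assumes the system is \emph{amenable} (an averaging net for $(G,A)$, i.e.\ Bochner integrability of the $A$-valued maps $g\mapsto f_\iota(g)\a_g(a)$, and a net that is both left and right averaging), whereas Proposition~\ref{ch} assumes only $S$-weak amenability, which supplies an averaging net for $(G,\mathbb{C})$ and measurability of the scalar functions $g\mapsto\f(\a_g(a))$ for $\f\in S$. So you cannot invoke Theorem~\ref{UE-A} as a black box in either direction; in particular its conditions (ii) and (viii), through which its implication (i)$\Rightarrow$(vi) passes, involve norm-convergent $A$-valued integrals that are not even defined in the present setting. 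The repair is to argue at the scalar level throughout: your density/uniform-boundedness promotion of the convergence from $S$ to all of $A^*$ is sound (measurability of $g\mapsto\f(\a_g(a))$ for arbitrary $\f\in A^*$ also follows by density), and from there (i)$\Rightarrow$(ii) is best finished directly: $E\circ\a_h=E$ follows from the right-averaging property applied to $F(g)=\f(\a_g(a))$, your argument shows $E$ is a positive unital projection onto $A^\a$, every state $\psi$ on $A^\a$ then extends to the invariant state $\tilde\psi\circ E$, and any invariant extension $\omega$ satisfies $\omega(a)=\omega(Ea)=\psi(Ea)$, giving uniqueness. For (ii)$\Rightarrow$(i) one likewise has to reprove a scalar analogue of (i)$\Rightarrow$(vi), e.g.\ via the Hahn--Banach equivalence (i)$\Leftrightarrow$(iv) of Theorem~\ref{UE-A}, which is amenability-free.

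The second gap is the one you flag yourself but do not close: in (ii)$\Rightarrow$(i) you must identify the projection produced by relative unique ergodicity with the \emph{given} $E$. The hypothesis $\a_gE=E$ only says $E(A)\subseteq A^\a$; it does not make $E$ a projection, nor positive, nor surjective onto $A^\a$, nor $\a$-invariant in the sense $E\circ\a_g=E$. Hence the uniqueness clause of Theorem~\ref{UE-A}(vii)---uniqueness \emph{within the class of $\a$-invariant positive projections onto $A^\a$}---does not apply to $E$, and the density of the linear hull of $S$ in $A^*$ only tells you that $E(x)$ is determined by the numbers $\f(E(x))$, $\f\in S$; it does not tell you that these numbers equal the limits of the averages, which is precisely what is to be proved. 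Indeed, as literally stated the implication (ii)$\Rightarrow$(i) fails for $E=0$, which satisfies $\a_gE=E$: unique $(0,S)$-ergodicity would force the averages of $\f(\a_g(1))=\f(1)$ to vanish. So either one adds (or extracts from context) the assumption that $E$ is a positive projection onto $A^\a$ with $E\circ\a_g=E$---in which case it coincides with the canonical projection and your argument closes---or the implication cannot be established. This is where your proposal, and arguably the statement itself, needs repair.
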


The proof goes by the same lines with the proof of Theorem 2.1
\cite{FM}.

 Given a set $R$ in $A^*$ by $R^{ch}$ we
denote its convex hull.

\begin{proposition}\label{ch1} Let $(A,\a)$ be an $S$-weakly amenable $C^*$-dynamical system with $G$
unimodular with respect to the measure $\rho$, and let $(f_\imath)$
averaging net for $(G,\bc)$. The following conditions are
equivalent:
\begin{enumerate}
\item[(i)] $(A,\a)$ is unique $(E,S)$-weakly mixing
w.r.t. $(f_\imath)$;

\item[(ii)] $(A,\a)$ is unique
$(E,\overline{S^{ch}}^{\|\cdot\|_1})$-weakly mixing w.r.t.
$(f_\imath)$.

\item[(iii)] $(A,\a)$ is unique
$(E,\mathcal{S})$-weakly mixing w.r.t. $(f_\imath)$, where by
$\mathcal{S}$ we denote a closed subspace of $A^*$ generated by
$\overline{S^{ch}}^{\|\cdot\|_1}$.
\end{enumerate}
\end{proposition}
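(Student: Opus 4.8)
The plan is to prove the chain of equivalences by showing $(i)\Leftrightarrow(ii)$ and $(ii)\Leftrightarrow(iii)$, exploiting the fact that the defining quantity $\big|\f(\a_g(x-E(x)))\big|$ is, for fixed $x$ and $g$, a seminorm-type expression in $\f$ that is well-behaved under convex combinations and limits. First I would fix $x\in A$ and introduce the shorthand notation for the averaged absolute value, writing
\begin{equation*}
M_\iota(\f,x):=\frac{1}{\int f_\iota}\int f_\iota(g)\big|\f(\a_g(x-E(x)))\big|dg,
\end{equation*}
so that unique $(E,T)$-weak mixing for a functional set $T$ is precisely the statement $\lim_\iota M_\iota(\f,x)=0$ for all $x\in A$ and all $\f\in T$.

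For $(i)\Rightarrow(ii)$, since $S\subseteq\overline{S^{ch}}^{\|\cdot\|_1}$, I must extend the limit from $S$ to its closed convex hull. The key observation is the elementary inequality $M_\iota(\lambda\f+(1-\lambda)\eta,x)\le\lambda M_\iota(\f,x)+(1-\lambda)M_\iota(\eta,x)$ for $\lambda\in[0,1]$ (and more generally subadditivity and positive homogeneity in the functional variable), which follows from the triangle inequality inside the integral. Hence if $M_\iota\to0$ for all $\f\in S$, it converges to $0$ for all convex combinations, giving the result for $S^{ch}$. For the $\|\cdot\|_1$-closure I would use the uniform bound $M_\iota(\f,x)\le\|\f\|\,\|x-E(x)\|$ (valid because each $\a_g$ is a contraction and the averaging weights are normalized), so that $\f\mapsto M_\iota(\f,x)$ is $\|\cdot\|_1$-Lipschitz uniformly in $\iota$; a standard $\varepsilon/3$ argument then transfers the convergence to any norm-limit of elements of $S^{ch}$. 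The reverse implication $(ii)\Rightarrow(i)$ is immediate from $S\subseteq\overline{S^{ch}}^{\|\cdot\|_1}$.

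For $(ii)\Leftrightarrow(iii)$, I would note that $\mathcal{S}$, the closed subspace generated by $\overline{S^{ch}}^{\|\cdot\|_1}$, consists of limits of linear combinations of elements of the closed convex hull. The $(iii)\Rightarrow(ii)$ direction is again trivial by inclusion. For $(ii)\Rightarrow(iii)$ the only new feature beyond the convex case is handling scalar multiples and differences, i.e. general linear combinations rather than convex ones; here I would use positive homogeneity $M_\iota(c\f,x)=|c|M_\iota(\f,x)$ together with subadditivity to dominate $M_\iota\big(\sum_j c_j\f_j,x\big)\le\sum_j|c_j|M_\iota(\f_j,x)$, and then close up in $\|\cdot\|_1$ exactly as before using the uniform Lipschitz bound.

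I expect the main obstacle to be purely bookkeeping rather than conceptual: one must verify that the measurability hypothesis built into $S$-weak amenability is preserved under these operations, so that the integrals defining $M_\iota(\f,x)$ continue to make sense for the new functionals in $\overline{S^{ch}}^{\|\cdot\|_1}$ and in $\mathcal{S}$. For convex combinations and finite linear combinations this is clear, but for the norm-closure step one should confirm that a $\|\cdot\|_1$-limit $\f$ of admissible functionals yields an integrable integrand $g\mapsto\f(\a_g(x-E(x)))$; this follows because uniform convergence of the integrands (a consequence of $|\f(\a_g\cdot)-\f_n(\a_g\cdot)|\le\|\f-\f_n\|_1\|x-E(x)\|$) preserves measurability and integrability. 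Once this technical point is dispatched, the three implications reduce to the elementary seminorm estimates above.
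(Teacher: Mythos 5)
Your proof is correct, and it supplies exactly the routine verification the paper omits: the authors give no argument for this proposition, stating only that ``the proof is straightforward,'' and the intended reasoning is precisely your observation that $\varphi\mapsto\frac{1}{\int f_\iota}\int f_\iota(g)\left|\varphi(\alpha_g(x-E(x)))\right|dg$ is subadditive, positively homogeneous, and uniformly Lipschitz in $\varphi$ (with constant $\|x-E(x)\|$, since each $\alpha_g$ is a contraction), so that vanishing of the limit passes from $S$ to convex combinations, linear combinations, and norm limits. Your attention to the measurability of $g\mapsto\varphi(\alpha_g(a))$ for the new functionals is a sensible extra check and is handled correctly via uniform convergence of the integrands.
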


The proof is straightforward.

Recall that a linear functional $f\in A^*$ is called
\textit{Hermitian}, if $f^*=f$, where $f^*(x)=\overline{f(x^*)}$,
$x\in A$. In what follows we will use the third condition in
Proposition \ref{ch1}, i.e. $\mathcal{S}$ is
assumed to be any closed subspace of $A^*$.
By ${{\mathcal S}}_h$ we denote the set of all Hermitian functionals
belonging to $\mathcal{S}$. In some instances we will further assume that
the space $\mathcal{S}$ is self-adjoint, i.e. for every $f\in
\mathcal{S}$ one has $f^*\in \mathcal{S}$, in which case any functional belonging to
$\mathcal{S}$ is a linear combination of elements of $\mathcal{S}_h$.

In what follows, when we refer to the properties in Definition 4.2, we implicitly mean
with respect to the averaging net which is involved.

\begin{theorem} Let $(A,\a)$ be an $\mathcal{S}$-weakly amenable $C^*$-dynamical system
and let $(f_\imath)$ be an averaging net for $(G,\bc)$. Let the
dynamical system $(A\otimes A, \a\otimes \a)$ be unique $(E\otimes
E,\mathcal{S}\otimes\mathcal{S})$-ergodic, and assume that $E$
preserves the involution (i.e. $E(x^*)=E(x)^*$) and that $\mathcal{S}$ is
self-adjoint. Then $(A,\a)$ is unique $(E,\mathcal{S})$-weakly mixing.
\end{theorem}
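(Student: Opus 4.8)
The plan is to mimic the classical argument that weak mixing of a system is equivalent to ergodicity of its product with itself, but now in the relative $(E,\mathcal{S})$-setting, exploiting the elementary fact that a net of nonnegative reals Ces\`aro-averages (with the weights $f_\iota$) to zero if and only if the same is true of its squares. Concretely, fix $x\in A$ and write $y:=x-E(x)$. The goal is to show
\begin{equation*}
\lim_{\iota}\frac{1}{\int f_\iota}\int f_\iota(g)\,\bigl|\f(\a_g(y))\bigr|\,dg=0
\end{equation*}
for all $\f\in\mathcal{S}$. Since $\mathcal{S}$ is self-adjoint, every $\f\in\mathcal{S}$ is a linear combination of Hermitian functionals in $\mathcal{S}_h$, so by linearity and the triangle inequality it suffices to treat $\f\in\mathcal{S}_h$; for such $\f$ the quantity $\f(\a_g(y))$ can be handled through its real structure, and because $E$ preserves the involution we may further reduce to $y=y^*$ self-adjoint, so that $\f(\a_g(y))$ is real for Hermitian $\f$.

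The core step is to pass to squares. First I would observe that, for real numbers, $|t|$ and $t^2$ have equivalent averaging-to-zero behaviour via Cauchy--Schwarz applied to the averaging net: if the weighted mean of $\f(\a_g(y))^2$ tends to $0$, then by Cauchy--Schwarz (with respect to the probability-like weight $f_\iota/\!\int f_\iota$) the weighted mean of $|\f(\a_g(y))|$ tends to $0$ as well. So it suffices to show
\begin{equation*}
\lim_{\iota}\frac{1}{\int f_\iota}\int f_\iota(g)\,\f(\a_g(y))^2\,dg=0 .
\end{equation*}
Now comes the key identity: for Hermitian $\f$, writing the product functional $\f\otimes\f\in\mathcal{S}\otimes\mathcal{S}$ and using $(\a\otimes\a)_g=\a_g\otimes\a_g$, one has
\begin{equation*}
\f(\a_g(y))^2=(\f\otimes\f)\bigl((\a\otimes\a)_g(y\otimes y)\bigr).
\end{equation*}
I would then identify $(E\otimes E)(y\otimes y)$: since $E$ is a projection with $y=x-E(x)$ satisfying $E(y)=0$, the element $y\otimes y$ lies in the kernel of $E\otimes E$, i.e. $y\otimes y=(y\otimes y)-(E\otimes E)(y\otimes y)$. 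Hence the left-hand averages are exactly the unique $(E\otimes E,\mathcal{S}\otimes\mathcal{S})$-ergodicity averages for the product system applied to $z:=y\otimes y$ and the functional $\f\otimes\f$, and the hypothesis that $(A\otimes A,\a\otimes\a)$ is unique $(E\otimes E,\mathcal{S}\otimes\mathcal{S})$-ergodic forces this weighted mean to vanish in the limit.

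The main obstacle I anticipate is bookkeeping at the level of functionals rather than any deep analytic difficulty: I must verify that $\f\otimes\f$ genuinely lies in the functional set $\mathcal{S}\otimes\mathcal{S}$ for which product ergodicity is assumed (this is where self-adjointness of $\mathcal{S}$ and the reduction to $\f\in\mathcal{S}_h$ are used, since a general $\f$ must first be split into Hermitian pieces before squaring, producing cross terms $\f_1\otimes\f_2$ that also need to lie in $\mathcal{S}\otimes\mathcal{S}$), and that the Cauchy--Schwarz step is legitimate for the (possibly non-finite, net-indexed) weights $f_\iota$. The requirement $E(x^*)=E(x)^*$ is exactly what guarantees that the self-adjoint reduction is compatible with subtracting $E$, closing the argument.
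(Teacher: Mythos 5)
Your proposal is correct and follows essentially the same route as the paper's own proof: reduce to Hermitian functionals in $\mathcal{S}_h$ and self-adjoint elements, apply unique $(E\otimes E,\mathcal{S}\otimes\mathcal{S})$-ergodicity of the product system to $(x-E(x))\otimes(x-E(x))$ to make the weighted mean of $|\varphi(\alpha_g(x-E(x)))|^2$ vanish, pass to the weighted mean of $|\varphi(\alpha_g(x-E(x)))|$ by Cauchy--Schwarz, and finish by writing a general element of $\mathcal{S}$ as a linear combination of Hermitian ones. (Like the paper, you implicitly use that $E$ is idempotent so that $(E\otimes E)\bigl((x-E(x))\otimes(x-E(x))\bigr)=0$.)
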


\begin{proof}  Take any $\p\in
{\mathcal{S}}_h$, and $x\in A$ with $x=x^*$. Then unique $(E\otimes
E,{\mathcal{S}}\otimes{\mathcal{S}})$-ergodicity of $(A\otimes A,
\a\otimes \a)$ implies that
$$
\lim_{\imath}\frac{1}{\int f_\imath}\int f_\imath(g)
\p\otimes\p(\a_g\otimes \a_g((x-E(x))\otimes (x-E(x))))dg=0.
$$

Now self-adjointness of $x$ yields
\begin{equation}\label{ue2}
\lim_{\imath}\frac{1}{\int f_\imath}\int
f_\imath(g)|\p(\a_g(x-E(x)))|^2dg=0.
\end{equation}

By the Schwarz inequality one finds
\begin{eqnarray*}
\frac{1}{\int f_\imath}\int f_\imath(g) |\p(\a_g(x-E(x)))|dg&\leq&
\frac{1}{\int f_\imath}\sqrt{\int f_\imath}
\sqrt{\int f_\imath(g)|\p(\a_g(x-E(x)))|^2}dg\nonumber\\
&=& \sqrt{\frac{1}{\int f_\imath}\int f_\imath(g)
|\p(\a_g(x-E(x)))|^2dg},
\end{eqnarray*}
which with \eqref{ue2} implies
\begin{eqnarray}\label{eq3}
\lim_{\imath}\frac{1}{\int f_\imath}\int
f_\imath(g)|\p(\a_g(x-E(x)))|dg=0.
\end{eqnarray}

It is known that any functional in $\mathcal{S}$ can be represented
as a linear combination of functionals from ${\mathcal{S}}_{h}$,
therefore, from \eqref{eq3} one gets
\begin{eqnarray}\label{eq4}
\lim_{\imath}\frac{1}{\int f_\imath}\int
f_\imath(g)|\f(\a_g(x-E(x)))|dg=0, \ \ \ \ \textrm{for every } \ \
\f\in\mathcal{S},
\end{eqnarray}
which implies the desired assertion.
\end{proof}

\begin{theorem}\label{mix-a1} Let $(A,\a)$ and $(B,\b)$ respectively be
an $\mathcal{S}$-weakly amenable and an $\mathcal{H}$-weakly amenable
$C^*$-dynamical system, and let $(f_\imath)$ be an
averaging net for $(G,\bc)$. If $(A,\a)$ and $(B,\b)$ are unique
$(E_\a,\mathcal{S})$-weak mixing and unique
$(E_\b,\mathcal{H})$-weak mixing, respectively, then the
$C^*$-dynamical system $(A\otimes B, \a\otimes\b)$ is
$\mathcal{S}\otimes \mathcal{H}$-weakly amenable and unique
$(E_{\a}\otimes E_\b,\mathcal{S}\otimes\mathcal{H})$-weak mixing.
\end{theorem}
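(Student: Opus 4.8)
The plan is to establish the two assertions in turn: first the essentially routine $\mathcal{S}\otimes\mathcal{H}$-weak amenability of $(A\otimes B,\alpha\otimes\beta)$, and then the substantive claim of unique $(E_\alpha\otimes E_\beta,\mathcal{S}\otimes\mathcal{H})$-weak mixing. For weak amenability I would retain the same invariant measure $\rho$ and the same averaging net $(f_\imath)$, so only the measurability condition needs checking. For a product functional $\varphi\otimes\psi$ and an elementary tensor $a\otimes b$ the function
\[
g\mapsto(\varphi\otimes\psi)\big((\alpha_g\otimes\beta_g)(a\otimes b)\big)=\varphi(\alpha_g a)\,\psi(\beta_g b)
\]
is a product of two Borel measurable functions by the $\mathcal{S}$- and $\mathcal{H}$-weak amenability of the factors, hence measurable; the general case then follows by linearity over finite sums and by passing to uniform limits, since a uniform limit of measurable scalar functions is measurable (the uniform approximation being supplied by the estimate in the third paragraph below).

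For the mixing statement the key observation is the purely algebraic identity
\[
a\otimes b-E_\alpha a\otimes E_\beta b=(a-E_\alpha a)\otimes b+E_\alpha a\otimes(b-E_\beta b),
\]
valid for all $a\in A$, $b\in B$. Writing $E:=E_\alpha\otimes E_\beta$ and applying a product functional $\varphi\otimes\psi$ to $(\alpha_g\otimes\beta_g)$ of this expression gives
\[
(\varphi\otimes\psi)\big((\alpha_g\otimes\beta_g)(a\otimes b-E(a\otimes b))\big)=\varphi(\alpha_g(a-E_\alpha a))\,\psi(\beta_g b)+\varphi(\alpha_g E_\alpha a)\,\psi(\beta_g(b-E_\beta b)).
\]
Bounding the two ``spectator'' factors by $|\psi(\beta_g b)|\leq\|\psi\|\,\|b\|$ and $|\varphi(\alpha_g E_\alpha a)|\leq\|\varphi\|\,\|E_\alpha a\|$ (using $\|\alpha_g\|=\|\beta_g\|=1$), taking absolute values and averaging against $f_\imath$ yields
\[
\frac{1}{\int f_\imath}\int f_\imath(g)\,\big|(\varphi\otimes\psi)((\alpha_g\otimes\beta_g)(a\otimes b-E(a\otimes b)))\big|\,dg\leq\|\psi\|\|b\|\,M_\imath+\|\varphi\|\|E_\alpha a\|\,N_\imath,
\]
where $M_\imath$ and $N_\imath$ are precisely the weak-mixing averages for $\varphi\in\mathcal{S}$ on $(A,\alpha)$ and for $\psi\in\mathcal{H}$ on $(B,\beta)$, both tending to $0$ by hypothesis. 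This disposes of the case of an elementary tensor tested against a product functional.

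The remaining work is the extension to arbitrary $c\in A\otimes B$ and arbitrary $\omega\in\mathcal{S}\otimes\mathcal{H}$. For finite linear combinations of elementary tensors and of product functionals, the triangle inequality applied \emph{inside} the average reduces everything to the previous paragraph. To reach the closures I would run an $\varepsilon$-approximation together with the uniform-in-$g$ bound
\[
\big|\omega((\alpha_g\otimes\beta_g)(c-Ec))\big|\leq\|\omega\|\,(1+\|E\|)\,\|c\|,
\]
which follows from $\|\alpha_g\otimes\beta_g\|=1$: replacing $c$ by a nearby finite sum of elementary tensors $c'$ and $\omega$ by a nearby finite combination of product functionals $\omega'$ alters the average by at most $C\varepsilon$ uniformly in $\imath$, while the average for $(c',\omega')$ tends to $0$; letting $\varepsilon\to0$ finishes the proof.

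I expect the genuine obstacle to lie not in these scalar estimates but in the legitimacy of the objects entering them: one must know that $E=E_\alpha\otimes E_\beta$ is a well-defined \emph{bounded} operator on the relevant C*-tensor product and that each $\varphi\otimes\psi$ extends to a bounded functional on it, so that the factorization and the displayed norm bounds hold with the stated constants. In the present framework this is ensured because the $E_\alpha,E_\beta$ of interest are completely positive (hence so is their tensor product, which is therefore bounded) and the tensor functionals of states behave as expected; this is exactly where one needs more than bare linearity of $E$. Keeping track of which C*-tensor product is used, and of the corresponding extension of $\varphi\otimes\psi$, is the one delicate ingredient, after which the argument is a direct combination of the decomposition identity, contractivity of the $(\alpha\otimes\beta)_g$, and the two given weak-mixing hypotheses.
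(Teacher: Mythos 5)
Your proof is correct and follows the same overall strategy as the paper (decompose $a\otimes b-E_\alpha a\otimes E_\beta b$ into pieces, kill each piece using one of the two weak-mixing hypotheses with the other factor bounded uniformly in $g$, then extend by linearity and a norm-density argument in both the element and the functional), but your decomposition is genuinely simpler. The paper splits
\begin{equation*}
x\otimes y-E_\alpha x\otimes E_\beta y=(x-E_\alpha x)\otimes(y-E_\beta y)+(x-E_\alpha x)\otimes E_\beta y+E_\alpha x\otimes(y-E_\beta y)
\end{equation*}
into three terms and spends a nontrivial portion of the argument on the cross term, which it handles by a Cauchy--Schwarz estimate combining both weak-mixing averages. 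Your two-term identity
\begin{equation*}
a\otimes b-E_\alpha a\otimes E_\beta b=(a-E_\alpha a)\otimes b+E_\alpha a\otimes(b-E_\beta b)
\end{equation*}
absorbs the cross term into the first summand and so eliminates the Cauchy--Schwarz step entirely; each summand needs only one hypothesis plus the trivial bound on the spectator factor. This is a clean improvement in exposition, not just a cosmetic change. Your closing caveat about the legitimacy of $E_\alpha\otimes E_\beta$ as a bounded operator on the chosen C*-tensor product, and of $\varphi\otimes\psi$ as a bounded functional on it, is well taken: the paper's hypotheses only declare $E$ to be a bounded linear operator and do not specify the tensor product, so the boundedness of $E_\alpha\otimes E_\beta$ (needed for the final density step in $c$) is tacitly assumed there as well; your observation that complete positivity (or complete boundedness) of the $E$'s repairs this is the right fix. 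One small point worth making explicit in a final write-up: to pass from the average of $\lvert\varphi(\alpha_g(a-E_\alpha a))\rvert$ tending to zero to measurability and integrability of the product against $\psi(\beta_g b)$ you are implicitly using the weak amenability you established in the first step, so the order of the two halves of your argument matters.
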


\begin{proof} The measurability $g\mapsto \f(\a_g(a))$ and $g\mapsto \p(\b_g(b))$
for every $\f\in\mathcal{S}$, $\p\in \mathcal{H}$, $a\in A$, $b\in
B$, implies that the measurability of $g\mapsto
\f(\a_g(a))\p(\b_g(b))$. Correspondingly, we find that $g\mapsto
\w(\a_g(a)\otimes\b_g(b))$ is measurable for every $\w\in
\mathcal{S}\odot\mathcal{H}$. The density
$\mathcal{S}\odot\mathcal{H}$ in $\mathcal{S}\otimes \mathcal{H}$
yields the measurability $g\mapsto \w(\a_g(a)\otimes\b_g(b))$ for
every $\w\in \mathcal{S}\otimes \mathcal{H}$. Using the same
argument, one can show the measurability of $g\mapsto
\w(\a_g\otimes\b_g(\xb))$ for all $\xb\in A\otimes B$. Hence,
$(A\otimes B, \a\otimes\b)$ is an $\mathcal{S}\otimes
\mathcal{H}$-weakly amenable.

Let $x\in A$ and $y\in B$. Denote
\begin{equation}\label{seq}
x_0=x-E_\a(x), \ \ \ y_0=y-E_\b(y).
\end{equation}
It is clear that $\|x_0\|\leq C_1\|x\|$, $\|y_0\|\leq C_2\|y\|$ for
some $C_1,C_2\in \br_+$. Moreover, for any $\f\in \mathcal{S}$,
$\v\in \mathcal{H}$ one can see that
\begin{eqnarray}\label{xyk1}
\lim_{\imath}\frac{1}{\int f_\imath}\int
f_\imath(g)|\f(\a_g(x_0))|dg=0, \ \ \lim_{\imath}\frac{1}{\int
f_\imath}\int f_\imath(g)|\v(\b_g(y_0))|dg=0
\end{eqnarray}

Consequently, the Schwarz inequality yields
\begin{eqnarray}\label{xyk2}
&&\frac{1}{\int f_\imath}\int f_\imath(g)|\f(\a_g(x_0))\v(\b_g(y_0))|dg\nonumber \\
&\leq&
\sqrt{\frac{1}{\int f_\imath}\int f_\imath(g)|\f(\a_g(x_0))|^2dg}
\sqrt{\frac{1}{\int f_\imath}\int f_\imath(g)|\v(\b_g(y_0))|^2dg}\nonumber\\
&\leq&
C_2\|y\|\|\v\|\sqrt{\frac{1}{\int f_\imath}\int f_\imath(g)|\f(\a_g(x_0))|^2dg}\nonumber\\
&\leq&
2\|y\|\|\v\|\big(C_1\|x\|\|\f\|\big)^{1/2}\sqrt{\frac{1}{\int
f_\imath}\int f_\imath(g)|\f(\a_g(x_0))|dg}
\end{eqnarray}

Hence, \eqref{xyk1} with \eqref{xyk2} implies that
\begin{eqnarray}\label{U3}
\frac{1}{\int f_\imath}\int f_\imath(g)|\f\otimes
\v(\a_g(x_0)\otimes \b_g(y_0))|dg=0.
\end{eqnarray}

Recall that finite linear combinations of elements of type
$\f\otimes\p$ we denote by $\mathcal{S}\odot\mathcal{H}$, i.e.
$$
\mathcal{S}\odot\mathcal{H}=\bigg\{\sum_{i=1}^n\l_i\f_i\otimes\p_i\
:\ \{\l_i\}_{i=1}^n\subset\bc,\
\{\f_i\}_{i=1}^n\subset{\mathcal{S}},
\{\p_i\}_{i=1}^n\subset{\mathcal{H}}, n\in\bn\bigg\}
$$

Consequently, from \eqref{U3} we find
\begin{eqnarray*}
\lim_{\imath}\frac{1}{\int f_\imath}\int
f_\imath(g)|\w(\a_g(x_0)\otimes \b_g(y_0))|dg=0 \ \ \textrm{for
any}\ \ \w\in \mathcal{S}\odot\mathcal{H}.
\end{eqnarray*}
This with \eqref{seq} means that
\begin{eqnarray}\label{uum1}
\lim_{\imath}\frac{1}{\int f_\imath}\int f_\imath(g)&&
\big|\w(\a_g(x)\otimes \b_g(y))-
\w(\a_g(x)\otimes \b_g(E_\b(y)))\nonumber\\
&&- \w(\a_g(E_\a(x))\otimes\b_g(y))+\w(\a_g(E_\a(x))\otimes
\b_g(E_\b(y)))\big|dg=0
\end{eqnarray}

Now take any functional $\w$ from $\mathcal{S}\odot\mathcal{H}$,
i.e. it has the following form
$$
\w=\sum_{i=1}^n\l_i\f_i\otimes\p_i, \ \f_i\in{\mathcal{S}},\
\p_i\in{\mathcal{H}}, \ i=1,\dots,n.
$$

Then we get
\begin{eqnarray}\label{eq12}
&&\big|\w(\a_g(x)\otimes\b_g(E(y)))-\w(\a_g(E_\a(x))\otimes\b_g(E(y))\big|\\[2mm]
&\leq&\sum_{i=1}^n\l_i\big|\f_i(\a_g(x))-\f_i(\a_g(E_\a(x))\big||\p_i(\b_g(E(y)))|\\[2mm]
&\leq& \big(\max_{1\leq i\leq n}\|\p_i\|\big)\|E_\b\|\|y\|
\sum_{i=1}^n\l_i\big|\f_i(\a_g(x-E_\a(x)))\big|
\end{eqnarray}

According to unique $(E_\a,\mathcal{S})$-weak mixing condition, from
the last relations one finds
\begin{eqnarray}\label{uum2}
\lim_{\imath}\frac{1}{\int f_\imath}\int
f_\imath(g)\big|\w(\a_g(x)\otimes \b_g(E_\b(y)))-
\w(\a_g(E_\a(x))\otimes\b_g(E_\b(y)))\big|dg=0
\end{eqnarray}

Similarly, one gets
\begin{eqnarray}\label{uum3}
\lim_{\imath}\frac{1}{\int f_\imath}\int
f_\imath(g)\big|\w(\a_g(E_\a(x))\otimes \b_g(y))-
\w(\a_g(E_\a(x))\otimes\b_g(E_\b(y)))\big|dg=0
\end{eqnarray}

The inequality
\begin{eqnarray*}
&&|\w\big(\a_g\otimes \b_g(x\otimes
y-E_\a\otimes E_\b(x\otimes y)\big)| \nonumber \\
&\leq&\big|\w(\a_g(x)\otimes \b_g(y))-
\w(\a_g(x)\otimes \b_g(E_\b(y)))\nonumber\\
&&- \w(\a_g(E_\a(x))\otimes\b_g(y))+\w(\a_g(E_\a(x))\otimes
\b_g(E_\b(y)))\big|\nonumber \\
&&+\big|\w(\a_g(x)\otimes \b_g(E_\b(y)))-\w(\a_g(E_\a(x))\otimes \b_g(E_\b(y)))\big|\\[2mm]
&&+\big|\w(\a_g(E_\a(x))\otimes \b_g(y))-\w(\a_g(E_\a(x))\otimes
\b_g(E_\b(y)))\big|
\end{eqnarray*}
with \eqref{uum1},\eqref{uum2}, \eqref{uum3} implies that
\begin{eqnarray}\label{eq21}
\lim_{\imath}\frac{1}{\int f_\imath}\int
f_\imath(g)\big|\w(\a_g\otimes \b_g(x\otimes y-E_\a\otimes
E_\b(x\otimes y))\big|dg=0.
\end{eqnarray}

The norm-denseness of the elements $\sum_{i=1}^m x_i\otimes y_i$ in
$A\otimes B$ with \eqref{eq21} yields
\begin{eqnarray}\label{mixxx}
\lim_{\imath}\frac{1}{\int f_\imath}\int
f_\imath(g)\big|\w(\a_g\otimes \b_g({\mathbf{z}}-E_\a\otimes
E_\b({\mathbf{z}}))\big|dg=0.
\end{eqnarray}
for arbitrary ${\mathbf{z}}\in A\otimes B$.

Now taking into account the norm denseness of
$\mathcal{S}\odot\mathcal{H}$ in $\mathcal{S}\otimes\mathcal{H}$,
we conclude from \eqref{mixxx} that $(A\otimes B,G,\a\otimes \b)$ is
unique $(E_\a\otimes E_\b,\mathcal{S}\otimes\mathcal{H})$-weak
mixing.
\end{proof}

\begin{remark} The proved Theorem \ref{mix-a1} extends some
results of \cite{M}-\cite{M3}. We note that in \cite{Av,L,Wa}
similar results were proved for weak mixing dynamical systems
defined over von Neumann algebras.
\end{remark}

\begin{theorem}\label{mix-erg} Let $(A,\a)$ and $(B,\b)$ respectively be
an $\mathcal{S}$-weakly amenable and an $\mathcal{H}$-weakly amenable
$C^*$-dynamical system, and let $(f_\imath)$ be an
averaging net for $(G,\bc)$. If $(A,\a)$ is unique
$(E_\a,\mathcal{S})$-weakly mixing and $(B,\b)$ unique
$(E_\b,\mathcal{H})$-ergodic with $\a_gE_\a=E_\a$ for all $g\in G$,
then the $C^*$-dynamical system $(A\otimes B,\a\otimes\b)$ is unique
$(E_{\a}\otimes E_\b,\mathcal{S}\otimes\mathcal{H})$-ergodic.
\end{theorem}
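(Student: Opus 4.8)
The plan is to reduce the product ergodicity to the two factor hypotheses by first cutting down to elementary tensors and product functionals, and then decomposing the integrand so that the \emph{varying} factor is routed through weak mixing of $(A,\a)$ while the \emph{constant} factor is routed through ergodicity of $(B,\b)$. To begin, $\mathcal{S}\otimes\mathcal{H}$-weak amenability of $(A\otimes B,\a\otimes\b)$ — in particular measurability and integrability of all integrands below — is already supplied by the first part of the proof of Theorem \ref{mix-a1}, so only the ergodic limit remains. Each average
$$I_\imath(\w,\mathbf{z}):=\frac{1}{\int f_\imath}\int f_\imath(g)\,\w\big(\a_g\otimes\b_g(\mathbf{z}-E_\a\otimes E_\b(\mathbf{z}))\big)\,dg$$
is bilinear in $(\w,\mathbf{z})$ and, since $\a_g\otimes\b_g$ is a contraction and $\frac{1}{\int f_\imath}\int f_\imath=1$, satisfies $|I_\imath(\w,\mathbf{z})|\le(1+\|E_\a\otimes E_\b\|)\|\w\|\,\|\mathbf{z}\|$ uniformly in $\imath$. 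Since $\mathcal{S}\odot\mathcal{H}$ is norm-dense in $\mathcal{S}\otimes\mathcal{H}$ and finite sums $\sum_j x_j\otimes y_j$ are norm-dense in $A\otimes B$, a standard three-$\varepsilon$ estimate reduces the claim $\lim_\imath I_\imath(\w,\mathbf{z})=0$ to the single case $\w=\f\otimes\v$ with $\f\in\mathcal{S}$, $\v\in\mathcal{H}$, and $\mathbf{z}=x\otimes y$.

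For such data, put $x_0=x-E_\a(x)$ and $y_0=y-E_\b(y)$ and use the telescoping identity
$$x\otimes y-E_\a(x)\otimes E_\b(y)=x\otimes y_0+x_0\otimes E_\b(y),$$
so that the integrand becomes $\f(\a_g x)\v(\b_g y_0)+\f(\a_g x_0)\v(\b_g E_\b(y))$. The second summand is immediately controlled by unique $(E_\a,\mathcal{S})$-weak mixing: since $|\v(\b_g E_\b(y))|\le\|\v\|\,\|E_\b\|\,\|y\|$ is a fixed bound, its average is dominated by $\|\v\|\,\|E_\b\|\,\|y\|$ times $\frac{1}{\int f_\imath}\int f_\imath(g)\,|\f(\a_g x_0)|\,dg$, which tends to $0$.

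The crux — and where I expect the only real obstacle — is the first summand $\f(\a_g x)\v(\b_g y_0)$. Here the ergodicity hypothesis on $(B,\b)$ controls the average of $\v(\b_g y_0)$ but \emph{not} of its modulus, so I cannot simply bound the $g$-dependent factor $\f(\a_g x)$ and pass it through as in Theorem \ref{mix-a1}. This is exactly the point at which the invariance $\a_g E_\a=E_\a$ is indispensable: it gives $\f(\a_g x)=\f(E_\a x)+\f(\a_g x_0)$, turning the first piece into a genuine constant in $g$. That constant piece contributes $\f(E_\a x)\cdot\frac{1}{\int f_\imath}\int f_\imath(g)\,\v(\b_g y_0)\,dg$, which vanishes in the limit by unique $(E_\b,\mathcal{H})$-ergodicity of $(B,\b)$ (no absolute value required), while the leftover $\f(\a_g x_0)\v(\b_g y_0)$ is again dominated, via $|\v(\b_g y_0)|\le\|\v\|\,\|y_0\|$, by the weak-mixing average of $|\f(\a_g x_0)|$ and so tends to $0$. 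Summing the three vanishing contributions yields $\lim_\imath I_\imath(\f\otimes\v,x\otimes y)=0$, and the density reduction of the first paragraph finishes the proof. I note that no self-adjointness of $\mathcal{S}$ or involution-invariance of the $E$'s is needed here (in contrast to the earlier characterization of weak mixing via product ergodicity), since weak mixing supplies the modulus bound directly for every $\f\in\mathcal{S}$; the two genuinely used ingredients are weak mixing of the first factor and the invariance $\a_g E_\a=E_\a$, and both enter precisely in splitting off the constant term above.
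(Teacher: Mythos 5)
Your proposal is correct and follows essentially the same route as the paper: after reducing by density and bilinearity to elementary tensors and product functionals, you split $x\otimes y-E_\a(x)\otimes E_\b(y)$ into the three pieces $x_0\otimes y_0$, $x_0\otimes E_\b(y)$ and $E_\a(x)\otimes y_0$, kill the first two by unique $(E_\a,\mathcal{S})$-weak mixing after bounding the second factor in modulus, and kill the third by using $\a_gE_\a=E_\a$ to freeze the first factor and then applying unique $(E_\b,\mathcal{H})$-ergodicity. This is exactly the decomposition behind the paper's displays \eqref{er1}--\eqref{er3}, and your closing observations (no self-adjointness needed; the invariance is used precisely to split off the constant term) match the paper's treatment.
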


\begin{proof} Let $x\in A$ and
$y\in B$. Then using the same argument of the proof of Theorem
\ref{mix-a1} and equality $\a_gE_\a=E_\a$ one finds
\begin{eqnarray}\label{er1}
\lim_{\imath}\frac{1}{\int f_\imath}\int f_\imath(g)&&
\bigg(\w(\a_g(x)\otimes \b_g(y))-
\w(\a_g(x)\otimes \b_g(E_\b(y)))\nonumber\\
&&- \w(E_\a(x)\otimes\b_g(y))+\w(E_\a(x)\otimes
\b_g(E_\b(y)))\bigg)dg=0
\end{eqnarray}

\begin{eqnarray}\label{er2}
\lim_{\imath}\frac{1}{\int f_\imath}\int
f_\imath(g)\big|\w((\a_g(x-E_\a(x))\otimes \b_g(E_\b(y)))\big|dg=0, \\
\label{er3} \lim_{\imath}\frac{1}{\int f_\imath}\int
f_\imath(g)\big(\w(E_\a(x)\otimes (\b_g(y-E_\b(y)))\big)dg=0.
\end{eqnarray}
for any $\w\in{\mathcal{S}}\odot\mathcal{H}$.

 The following
inequality
\begin{eqnarray*}
&&\bigg|\frac{1}{\int f_\imath}\int f_\imath(g)\big(\w(\a_g\otimes
\b_g(x\otimes
y-E_\a(x)\otimes E_\b(y))\big)\bigg|dg \nonumber \\
&\leq&\bigg|\frac{1}{\int f_\imath}\int f_\imath(g)
\big(\w(\a_g(x)\otimes \b_g(y))-\w(\a_g(x)\otimes \b_g(E_\b(y)))dg\nonumber\\
&&-
\w(E_\a(x)\otimes \b_g(y))+\w(E_\a(x)\otimes \b_g(E_\b(y)))\big)\bigg|dg\\[2mm]
&&+\frac{1}{\int f_\imath}\int
f_\imath(g)\big|\w((\a_g(x-E_\a(x))\otimes
\b_g(E_\b(y)))\big|dg\\[2mm]
&&+ \frac{1}{\int f_\imath}\bigg|\int
f_\imath(g)\big(\w(E_\a(x)\otimes (\b_g(y-E_\b(y)))\big)dg\bigg|
\end{eqnarray*}
and  \eqref{er1}-\eqref{er3} implies that
\begin{eqnarray*}
\lim_{\imath}\frac{1}{\int f_\imath}\int f_\imath(g)
\big(\w(\a_g\otimes \b_g(x\otimes y-E_\a\otimes E_\b(x\otimes
y))\big)dg=0.
\end{eqnarray*}

Finally, the density argument shows that $(A\otimes B,\a\otimes\b))$
is unique $(E_a\otimes E_\b,\mathcal{S}\otimes\mathcal{H})$-ergodic.
\end{proof}

This theorem allows us to construct many nontrivial examples of
unique $(E,S)$-ergodic and unique $(E,S)$-mixing dynamical systems.
In particular, this theorem shows how unique $\left(E,S\right)$-weak
mixing of a system can be characterized in terms of unique
$\left(E,S\right) $-ergodicity of the product of the system with
itself, in analogy to the well-known result in classical ergodic
theory (see \cite{ALW}).

\section{Multitime correlations functions}

Higher order mixing and recurrence properties of C*- and W*-dynamical
systems have received attention lately, for example in \cite{NSZ}, \cite{D0}, \cite{Fid},
\cite{BDS} and \cite{AET} (much of which was inspired
by the work of Furstenberg in classical ergodic theory \cite{F1}). In the
case of a C*-dynamical system with an action of the group $\mathbb{Z}$ for
example, this involves multitime correlation functions of the form
\begin{equation*}
\varphi \left( \alpha ^{n_{1}}\left( a_{1}\right) ...\alpha ^{n_{k}}\left(
a_{k}\right) \right)
\end{equation*}
where $\varphi $ is a state on $A$. However in the papers mentioned, when
considering general systems, one is either limited to very low orders (i.e.
very small values of $k$), or to systems which are asymptotically abelian in
some sense. These limitations appear to be in the nature of things (see for
example the discussion in \cite[Section 1c]{AET}), but one can nevertheless
ask whether one has some positive results for special types of systems. This
is what we explore in this section, for the dual systems of Section 3.

We note that multitime correlation functions and related ergodic averages
have also been studied (both from a physical and a mathematical point of
view) in \cite{BF}, \cite{ABDF}, \cite{ABDF2}, \cite{Pe}, \cite{D1},
\cite{Fid0} and \cite{EK}. These papers provide further motivation for the work
in this section.

\begin{definition}
Let $\left( A,\alpha \right) $ be a C*-dynamical system for an action of the
semigroup $\mathbb{N}$. Let $E:A\rightarrow A$ be linear, and let $S$ be a
set of bounded linear functionals on $A$. Then $\left( A,\alpha \right) $ is said to be
$\left(E,S\right) $\emph{-mixing of order }$k$ if
\begin{equation*}
\lim_{\bar{n}^{(k)}\rightarrow \infty }\varphi \left( \alpha
^{n_{p(1)}}\left( a_{1}\right) ...\alpha ^{n_{p(k)}}\left( a_{k}\right)
-\alpha ^{n_{p(1)}}\left( Ea_{1}\right) ...\alpha ^{n_{p(k)}}\left(
Ea_{k}\right) \right) =0
\end{equation*}
for all $a_{1},...,a_{k}\in A$, all $p\in S_{k}$ and all $\varphi \in S$,
where $S_{k}$ is the group of all permutations of $\left\{ 1,...,k\right\} $
and $\bar{n}^{(k)}\rightarrow \infty $ means that
$n_{1},n_{2}-n_{1},n_{3}-n_{2},...,n_{k}-n_{k-1}\rightarrow \infty $. If this
holds for every $k=1,2,3,...$, we say that $\left( A,\alpha \right) $ is
$\left( E,S\right) $\emph{-mixing of all orders}.
\end{definition}

In the rest of this section we consider the following dual system: Let
$\Gamma $ be the free group on an arbitrary set of symbols $L$; we are
especially interested in the case where $L$ is not a finite set, but this
assumption is not necessary for the results below. We consider an
automorphism $T$ of $\Gamma $ induced by an arbitrary bijection
$L\rightarrow L$. Let $\left( A,\alpha \right) $ be the dual system of
$\left( \Gamma ,T\right) $. As in Section 3, we set $H=L^{2}\left( \Gamma
\right) $ and $F:=\left\{ g\in \Gamma :T^{\mathbb{N}}(g)\text{ is finite}\right\} $,
and the left regular representation of $\Gamma $ is denoted by $\lambda $.

Note that this system is not asymptotically abelian in the sense of
\cite[Definition 1.10]{AET}: In terms of the cyclic vector $\Omega =\delta
_{1}\in H$ we don't have
\begin{equation*}
\lim_{N\rightarrow \infty }\frac{1}{N}\sum_{n=1}^{N}\left\| \left[ \alpha
^{n}(a),b\right] \Omega \right\| =0
\end{equation*}
for all $a,b\in A$, in terms of the norm of $H$, as can be checked by
considering elements of the form $a=\lambda (g)$ and $b=\lambda (h)$.

At the same time the state preserving C*-dynamical system $\left( A,\alpha
,\mu \right) $ need not be ergodic or compact, where $\mu $ is the canonical
trace $\mu =\left\langle \Omega ,\left( \cdot \right) \Omega \right\rangle $.
This can be arranged by considering a $T$ which has some finite orbits on $L$,
making ergodicity impossible, as well as some infinite orbits (requiring
$L$ to be infinite), which makes compactness impossible. See for example
\cite[Theorems 3.4 and 3.6]{D2}, which covers the case of W*-dynamical
systems, but since ergodicity and compactness have Hilbert space
characterizations in terms of cyclic representations (i.e. the GNS
construction), those results hold for the C*-dynamical case as well.

The point is that $\left( A,\alpha \right) $, or more specifically
$\left(A,\alpha ,\mu \right)$, doesn't have the special properties assumed in
\cite{NSZ}, \cite{D0}, \cite{BDS} and \cite{AET} (and keep in mind that in
\cite{Fid} only low orders were considered).

\begin{theorem}
Let $E$ be the conditional expectation given by Theorem 3.2. Then
$\left(A,\alpha \right) $ is $\left( E,S\right) $-mixing of all orders, where $S$
is the set of all states on $A$ given by density matrices on $H$.
\end{theorem}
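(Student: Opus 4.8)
The plan is to reduce the higher-order mixing statement to the computation already carried out in the proof of Theorem 3.2, by exploiting the explicit action on the generators $\lambda(g)$ together with the fact that states given by density matrices are exactly the vector states $\varphi(a)=\langle x,ax\rangle$ with $x\in H$ (as established at the end of that proof). By linearity and density, it suffices to prove the defining limit for $a_1,\dots,a_k$ each of the form $\lambda(g_1),\dots,\lambda(g_k)$ and for a vector functional $\varphi(\cdot)=\langle x,(\cdot)x\rangle$; the passage to general $a_i\in A$ and general density-matrix states $\varphi\in S$ then follows from a standard $3\varepsilon$/triangle-inequality argument using $\|\alpha^n\|=1$ and $\|E\|=1$.

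First I would record that $\alpha^n(\lambda(g))=\lambda(T^ng)$ and that $E\lambda(g)=\lambda(g)$ for $g\in F$ while $E\lambda(g)=0$ for $g\notin F$, so that the difference $\alpha^{n}(\lambda(g)-E\lambda(g))$ vanishes identically when $g\in F$ and equals $\lambda(T^ng)$ when $g\notin F$. Substituting $a_i=\lambda(g_i)$, the product $\alpha^{n_{p(1)}}(\lambda(g_1))\cdots\alpha^{n_{p(k)}}(\lambda(g_k))$ equals $\lambda(T^{n_{p(1)}}g_1\cdots T^{n_{p(k)}}g_k)$, a single group element, and similarly for the $E$-modified product. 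The key structural point is that the ``$-\,Ea_i$'' terms let me assume it is enough to control the situation where at least one $g_i$ lies outside $F$, i.e.\ has an infinite $T$-orbit; when every $g_i\in F$ the two products coincide and the difference is zero.

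The heart of the argument is then a combinatorial claim about the free group: if some $g_i\notin F$, then as $\bar{n}^{(k)}\to\infty$ (that is, $n_1,\,n_2-n_1,\dots,n_k-n_{k-1}\to\infty$) the reduced word representing $T^{n_{p(1)}}g_1\cdots T^{n_{p(k)}}g_k$ becomes long and eventually distinct from any fixed group element, so that the vector-state value $\langle x,\lambda(T^{n_{p(1)}}g_1\cdots)x\rangle=\langle \delta_{?},\,\delta_{?}\rangle$-type inner products tend to zero. Concretely, because $\Gamma$ is free on $L$ and $T$ is induced by a bijection of $L$, each $T^{n}g_i$ is obtained by relabelling the symbols of $g_i$; for a symbol with infinite orbit, large shifts $n_{p(i)}$ push it through infinitely many distinct generators, and the staircase condition $n_{j}-n_{j-1}\to\infty$ prevents cancellation between adjacent factors from persisting. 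I expect this non-cancellation/word-growth lemma to be the main obstacle: one must verify carefully that reduced-word length tends to infinity (hence the matrix coefficients $\langle\delta_f,\lambda(w)\delta_h\rangle$ are nonzero for at most finitely many configurations), which generalizes the single-$n$ computation in Theorem 3.2 to a $k$-fold product governed by the multi-index limit $\bar n^{(k)}\to\infty$.

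Finally, once the vanishing of $\langle x,\alpha^{n_{p(1)}}(\lambda(g_1)-E\lambda(g_1))\cdots\alpha^{n_{p(k)}}(\lambda(g_k)-E\lambda(g_k))x\rangle$ is established for generators, I would telescope the difference $\alpha^{n_{p(1)}}(a_1)\cdots\alpha^{n_{p(k)}}(a_k)-\alpha^{n_{p(1)}}(Ea_1)\cdots\alpha^{n_{p(k)}}(Ea_k)$ into a sum of $k$ terms, each of which replaces one factor $a_i$ by $a_i-Ea_i$ while leaving the others as either $a_j$ or $Ea_j$; each such term is controlled by the generator-level estimate together with the uniform bounds $\|\alpha^n\|=1$, $\|E\|=1$. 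The permutation $p\in S_k$ plays no essential new role since the estimate is symmetric in how the large gaps are distributed. Passing from finite linear combinations of $\lambda(g)$ to all $a\in A$ by norm density, and from vector states to all density-matrix states by the identification at the end of the Theorem 3.2 proof, then yields $(E,S)$-mixing of order $k$ for every $k$, i.e.\ of all orders.
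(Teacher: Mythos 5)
Your overall architecture matches the paper's proof exactly: reduce to generators $a_i=\lambda(g_i)$ and to vector functionals $\langle x,(\cdot)x\rangle$, observe that $E\lambda(g)$ is $\lambda(g)$ or $0$ according to whether $g\in F$, dispose of the case where all $g_i\in F$, reduce the remaining case to a statement about reduced words in the free group, and then recover general $a\in A$ and general density-matrix states by multilinearity, norm density and the identification of normal states with vector states from the end of the proof of Theorem 3.2. Your telescoping of $\prod\alpha^{n_{p(i)}}(a_i)-\prod\alpha^{n_{p(i)}}(Ea_i)$ into $k$ terms is a harmless variant (the paper instead notes that if some $g_j\notin F$ then $E a_j=0$ kills the entire second product), and your remark that the permutation $p$ plays no essential role agrees with the paper.

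However, the heart of the proof --- the free-group non-cancellation lemma --- is exactly the step you leave unproved, and the one concrete form in which you state it is false. It is not true that the reduced word $T^{n_{p(1)}}g_1\cdots T^{n_{p(k)}}g_k$ ``becomes long'': with $g_1=s$, $g_2=s^{-1}$ for a symbol $s$ of infinite orbit, the product $\left(T^{n_1}s\right)\left(T^{n_2}s^{-1}\right)$ has reduced length exactly $2$ for all $n_1\neq n_2$. What is true, and what the paper proves, is the weaker statement that the word is eventually distinct from any fixed group element, and the argument for this is not the adjacent-factor heuristic you describe: in a $k$-fold product in a free group, cancellation can propagate and swallow whole middle factors, so ``the gaps prevent cancellation between adjacent factors'' is not a proof. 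The paper's mechanism is to take the \emph{smallest} $j$ with $g_j\notin F$, pick a symbol $s$ of infinite orbit occurring in $g_j$, and show that for $n_j$ and all the gaps large enough the letter $T^{n_j}s^{-1}$ occurs nowhere in the entire concatenation $\left(T^{n_1}g_1\right)\cdots\left(T^{n_k}g_k\right)hf^{-1}$ --- it cannot occur in the $T^{n_l}g_l$ with $l<j$ (those symbols have finite orbits), nor in $hf^{-1}$ (finitely many symbols versus an infinite orbit), nor in $T^{n_l}g_l$ for $l>j$ once $n_l-n_j$ is large (no repetitions in an infinite orbit). Hence the occurrence of $T^{n_j}s$ inside the reduced word $T^{n_j}g_j$ can never be cancelled, so the product is not the identity. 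Without this (or an equivalent) letter-tracking argument, your proposal has named the obstacle but not overcome it.
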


\begin{proof}
Consider any $f,g_{1},...,g_{k},h\in \Gamma $. First consider the case
$a_{j}=\lambda \left( g_{j}\right) $. When $g_{1},...,g_{k}\in F$, we know by
Theorem 3.2 that $Ea_{j}=a_{j}$ for all $j$, in which case the situation
becomes trivial.

So we assume that at least one $g_{j}$ is not in $F$, which means
$Ea_{j}=0$. In this case we are going to show that $\left\langle
\delta _{f},\alpha ^{n_{p(1)}}\left( a_{1}\right) ...\alpha
^{n_{p(k)}}\left( a_{k}\right) \delta _{h}\right\rangle =0$ for
$n_{1},n_{2}-n_{1},...,n_{k}-n_{k-1}$ large enough. To simplify the
notation we only do it for $p$ the identity, but all other
permutations work the same way. So we need to show that $\left(
T^{n_{1}}g_{1}\right) ...\left( T^{n_{k}}g_{k}\right) hf^{-1}\neq 1$
(the identity of $\Gamma $) for
$n_{1},n_{2}-n_{1},...,n_{k}-n_{k-1}$ large enough. To do this, we
will show that there is a symbol appearing in the (reduced) word
$\left( T^{n_{1}}g_{1}\right) ...\left( T^{n_{k}}g_{k}\right)
hf^{-1}$ which is not canceled by the other symbols in the word.
Consider the smallest $j$ such that $g_{j}$ has an infinite orbit
under $T$, so in particular $g_{j}$ contains a symbol $s$ with an
infinite orbit under $T$. (Note that $L$ only contains the
``original'' symbols, not their inverses, but here we allow for the
possibility that $s^{-1}\in L$, i.e. $s$ may be the inverse of a
symbol in $L$. More generally, the term symbol refers to elements of
$L\cup L^{-1}$.)

We note that for $n_{j}$ large enough, the symbol $T^{n_{j}}s^{-1}$
doesn't appear in $hf^{-1}$, since $hf^{-1}$ consists of only a
finite number of symbols while $s^{-1}$ has an infinite orbit under
$T$. So there is then no symbol in $hf^{-1}$ which can cancel the
$T^{n_{j}}s$ in $T^{n_{j}}g_{j}$. And since $g_{j}$ is a reduced
word, so is $T^{n_{j}}g_{j}$, since $T$ is a bijection, so since $s$
appears in $g_{j}$, the symbol $T^{n_{j}}s$ does appear in the
reduced word $T^{n_{j}}g_{j}$; i.e. the symbol $T^{n_{j}}s$ isn't
canceled in $T^{n_{j}}g_{j}$ itself .

If there are no other $g_{l}$ containing a symbol from the set $T^{\mathbb{Z}}s^{-1}$,
we are finished, since no symbol can then cancel $T^{n_{j}}s$, for
$n_{j}$ large enough. Suppose however that there is such a $g_{l}$, $l>j$.
The most general reduced forms of $g_{j}$ and $g_{l}$ are
\begin{equation*}
g_{j}=b_{1}s^{q_{1}}b_{2}s^{q_{2}}b_{3}...b_{t}s^{q_{t}}b_{t+1}
\end{equation*}
where $b_{1},...,b_{t+1}\in \Gamma $ don't contain $s$ (they could contain
$s^{-1}$ though), and
\begin{equation*}
g_{l}=c_{1}\left( T^{m_{1}}s^{-1}\right) ^{r_{1}}c_{2}\left(
T^{m_{2}}s^{-1}\right) ^{r_{2}}c_{3}...c_{u}\left( T^{m_{u}}s^{-1}\right)
^{r_{u}}c_{u+1}
\end{equation*}
where $c_{1},...,c_{u+1}\in \Gamma $ don't contain any $T^{n}s^{-1}$, with
$q_{1},...,q_{t},r_{1},...,r_{u}\in \mathbb{N=}\left\{ 1,2,3,...\right\} $
and $m_{1},...,m_{u}\in \mathbb{Z}$. From
\begin{equation*}
T^{n_{j}}g_{j}=\left( T^{n_{j}}b_{1}\right) \left( T^{n_{j}}s\right)
^{q_{1}}...\left( T^{n_{j}}s\right) ^{q_{t}}\left( T^{n_{j}}b_{t+1}\right)
\end{equation*}
and
\begin{equation*}
T^{n_{l}}g_{l}=\left( T^{n_{l}}c_{1}\right) \left(
T^{m_{1}+n_{l}}s^{-1}\right) ^{r_{1}}...\left( T^{m_{u}+n_{l}}s^{-1}\right)
^{r_{u}}\left( T^{n_{l}}c_{u+1}\right)
\end{equation*}
one then easily sees that for $n_{l}-n_{j}$ large enough, the inverse symbol
$T^{n_{j}}s^{-1}$ of $T^{n_{j}}s$, does not appear in $T^{n_{l}}g_{l}$,
since $s$ and $s^{-1}$ have infinite orbits under $T$ and so there are no
repetition in their orbits. This shows in particular that $\left(
T^{n_{1}}g_{1}\right) ...\left( T^{n_{k}}g_{k}\right) hf^{-1}\neq 1$ for
$n_{1},n_{2}-n_{1},...,n_{k}-n_{k-1}$ large enough, since the symbol
$T^{n_{j}}s$ in $T^{n_{j}}g_{j}$ isn't cancelled.

Note that nowhere did the order in which the $\alpha _{1},...,\alpha _{k}$
appear play any role in the arguments above, so we can restore the
permutation $p$. We have therefore shown that
\begin{equation*}
\left\langle \delta _{f},\left[ \alpha ^{n_{p(1)}}\left( a_{1}\right)
...\alpha ^{n_{p(k)}}\left( a_{k}\right) -\alpha ^{n_{p(1)}}\left(
Ea_{1}\right) ...\alpha ^{n_{p(k)}}\left( Ea_{k}\right) \right] \delta
_{h}\right\rangle =0
\end{equation*}
for $n_{1},n_{2}-n_{1},...,n_{k}-n_{k-1}$ large enough, for any
$f,g_{1},...,g_{k},h\in \Gamma $, where $a_{j}=\lambda \left( g_{j}\right) $.
The result now follows as in the proof of Theorem 3.2.
\end{proof}

Using this theorem, one can show the following results without too much
effort:

\begin{corollary}
For any $k\in \mathbb{N}$ we have
\begin{align*}
& \lim_{N\rightarrow \infty }\frac{1}{N}\sum_{n=1}^{N}\left| \varphi \left(
\alpha ^{p(1)n}(a_{1})\alpha ^{p(2)n}(a_{2})...\alpha ^{p(k)n}(a_{k})\right)
\right|  \\
& =\lim_{N\rightarrow \infty }\frac{1}{N}\sum_{n=1}^{N}\left| \varphi \left(
\alpha ^{p(1)n}(Ea_{1})\alpha ^{p(2)n}(Ea_{2})...\alpha
^{p(k)n}(Ea_{k})\right) \right|
\end{align*}%
for all $\varphi \in S$ (as in the theorem above), $p\in S_{k}$ and
$a_{1},...,a_{k}\in A$, and in particular these limits exist. The result
still holds without the absolute value signs.
\end{corollary}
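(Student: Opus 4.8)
The plan is to deduce the corollary directly from the preceding theorem (that $\left(A,\alpha\right)$ is $\left(E,S\right)$-mixing of all orders) by choosing the multitime indices to form the arithmetic progression $n_i:=in$. First I would fix $k$, a permutation $p\in S_k$, elements $a_1,\dots,a_k\in A$, and a state $\varphi\in S$, and abbreviate
\[
b_n:=\varphi\bigl(\alpha^{p(1)n}(a_1)\cdots\alpha^{p(k)n}(a_k)\bigr),\qquad
c_n:=\varphi\bigl(\alpha^{p(1)n}(Ea_1)\cdots\alpha^{p(k)n}(Ea_k)\bigr).
\]
Setting $n_i=in$ for $i=1,\dots,k$ gives $n_{p(j)}=p(j)n$, so that the products above are exactly the ones appearing in the definition of $\left(E,S\right)$-mixing of order $k$. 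The separation requirement $\bar n^{(k)}\to\infty$ reduces here to $n_1=n\to\infty$ together with $n_i-n_{i-1}=n\to\infty$, both of which hold as $n\to\infty$. Hence the preceding theorem yields $b_n-c_n\to 0$, and the reverse triangle inequality gives $\bigl||b_n|-|c_n|\bigr|\le|b_n-c_n|\to 0$. Since the Cesàro mean of a null sequence is null, $\frac1N\sum_{n=1}^N(|b_n|-|c_n|)\to 0$; thus once one of the two Cesàro limits is shown to exist, the other exists and they coincide. The statement without absolute values follows identically, working with $b_n,c_n$ in place of $|b_n|,|c_n|$.

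It therefore remains to prove existence of $\lim_N\frac1N\sum_{n=1}^N|c_n|$, and the key point is that $c_n$ depends only on the finite-orbit data through $E$. Using the uniform bound $\bigl|\varphi(\prod_j\alpha^{p(j)n}(x_j))\bigr|\le\prod_j\|x_j\|$ (valid since $\|\alpha^m\|=1$ and $\|\varphi\|=1$) together with a standard multilinear telescoping estimate, the sequence $c_n$ is, uniformly in $n$, approximable by the same expression with each $Ea_j\in B$ replaced by a finite linear combination $y_j=\sum_g c_g\lambda(g)$, $g\in F$, such combinations being dense in the finite orbit algebra $B$. For each such $y_j$, every $g\in F$ has a finite, hence periodic, orbit under $T$, so $m\mapsto\alpha^m(\lambda(g))=\lambda(T^m g)$ is periodic; as only finitely many $g$ occur, $n\mapsto\prod_j\alpha^{p(j)n}(y_j)$ is a periodic $A$-valued sequence, whence $n\mapsto\bigl|\varphi(\prod_j\alpha^{p(j)n}(y_j))\bigr|$ is a periodic scalar sequence whose Cesàro averages converge to its mean over a period.

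Finally I would combine the two facts: $|c_n|$ is a uniform limit of periodic sequences, each of which is Cesàro-summable, so the partial averages $\frac1N\sum_{n=1}^N|c_n|$ form a Cauchy sequence and hence converge. I expect the main obstacle to be precisely this existence step, and in particular the fact—noted when the finite orbit algebra $B$ was introduced in Section 3—that a general element of $B$ need \emph{not} have a finite orbit under $\alpha$. This is why one cannot argue periodicity for $Ea_j$ directly and must instead pass through the dense set of finite linear combinations of $\lambda(g)$, $g\in F$, where periodicity, and hence convergence of the averages, is available; the mixing theorem then transfers convergence from $c_n$ to $b_n$.
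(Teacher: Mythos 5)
Your proposal is correct and follows essentially the same route as the paper: the paper likewise derives the identity from the mixing theorem applied along $n_i=in$, and establishes existence of the $E$-side limit from the finiteness (hence periodicity) of the orbits of elements of $F$ under $T$. Your write-up merely fills in the details the paper leaves implicit, in particular the passage through finite linear combinations of $\lambda(g)$, $g\in F$, which is indeed needed since general elements of the finite orbit algebra $B$ need not have finite orbit under $\alpha$.
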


\begin{proof}
The existence of
\begin{equation*}
\lim_{N\rightarrow \infty }\frac{1}{N}\sum_{n=1}^{N}\left| \varphi \left( \alpha
^{p(1)n}(Ea_{1})\alpha ^{p(2)n}(Ea_{2})...\alpha ^{p(k)n}(Ea_{k})\right)
\right|
\end{equation*}
can be shown easily from the fact that the orbits of elements of $F$ are
finite and therefore periodic under $T$ (similarly for the case where the
absolute value sign is dropped). The result then follows from the theorem
above.
\end{proof}

Note that the canonical trace $\mu =\left\langle \Omega ,\left( \cdot
\right) \Omega \right\rangle $ is $\alpha $-invariant, and then we obtain
the more conventional form of Furstenberg's Recurrence Theorem (see \cite{F1}
and \cite{HK05}):

\begin{corollary}
For any $k\in \mathbb{N}$ we have
\begin{equation*}
\lim_{N\rightarrow \infty }\frac{1}{N}\sum_{n=1}^{N}\left| \mu \left(
a\alpha ^{n}(a)\alpha ^{2n}(a)...\alpha ^{kn}(a)\right) \right| >0
\end{equation*}
and any positive $a\in A$ such that $\mu (a)>0$. This is still true if
$a,\alpha ^{n}(a),...,\alpha ^{kn}(a)$ appear in any other order.
\end{corollary}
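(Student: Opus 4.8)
The plan is to reduce the $(k{+}1)$-fold correlation to the finite orbit algebra $B$, where the dynamics is honestly periodic, and then extract positivity by a recurrence estimate. The first step is to match the exponent pattern $0,n,2n,\dots,kn$ of the statement to the pattern $n,2n,\dots,kn$ of the preceding corollary. For this I use that $\alpha$ is a $*$-automorphism with $\mu\circ\alpha=\mu$: applying $\alpha^{n}$ inside $\mu$ gives, term by term,
\[
\mu\big(a\,\alpha^{n}(a)\cdots\alpha^{kn}(a)\big)=\mu\big(\alpha^{n}(a)\,\alpha^{2n}(a)\cdots\alpha^{(k+1)n}(a)\big),
\]
whose exponents $n,2n,\dots,(k+1)n$ are exactly those of the preceding corollary with $k{+}1$ factors, $p$ the identity in $S_{k+1}$, all $a_{j}=a$, and $\varphi=\mu$ (which lies in $S$, being the vector state of $\Omega$). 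That corollary then permits replacing each $a$ by $Ea$ in the Cesàro average with absolute values; shifting the exponents back down by invariance, I obtain, with $b:=Ea$,
\[
\lim_{N}\frac{1}{N}\sum_{n=1}^{N}\big|\mu\big(a\,\alpha^{n}(a)\cdots\alpha^{kn}(a)\big)\big|=\lim_{N}\frac{1}{N}\sum_{n=1}^{N}\big|\mu\big(b\,\alpha^{n}(b)\cdots\alpha^{kn}(b)\big)\big|,
\]
both limits existing. Note the $a\mapsto Ea$ replacement is only valid in the Cesàro limit (it is the mixing content of the preceding corollary), whereas the invariance shifts hold termwise.

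Next I record the properties of $b=Ea$. Since $E=D|_{A}$ is a positive conditional expectation (Theorem 3.2), $b\ge 0$ and $b\in B$; and since $\omega\circ D=\omega$ forces $\mu\circ E=\mu$, we get $\mu(b)=\mu(a)>0$, so $b\ne 0$. As $\mu=\langle\Omega,(\cdot)\Omega\rangle$ is a faithful trace ($\Omega$ being separating for $M$), the number $c:=\mu(b^{k+1})$ satisfies $c>0$, because $b^{k+1}\ge 0$ and $b^{k+1}\ne 0$.

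The heart of the argument is recurrence on $B$. Because $B$ is generated by the $\lambda(g)$ with $g\in F$, and each such generator satisfies $\alpha^{d_{g}}(\lambda(g))=\lambda(g)$ (the orbit $T^{\mathbb{Z}}g$ being finite), every element of the dense $*$-subalgebra generated by these is $\alpha$-periodic; hence $b$ is a norm limit of $\alpha$-periodic elements. Thus for each $\varepsilon>0$ there is a $q\in\mathbb{N}$ with $\|\alpha^{n}(b)-b\|<\varepsilon$ for all $n\in q\mathbb{Z}$. For such $n$, all factors $\alpha^{jn}(b)$ with $0\le j\le k$ lie within $\varepsilon$ of $b$, so a telescoping estimate (using $\|\alpha^{jn}(b)\|=\|b\|$) gives $\|b\,\alpha^{n}(b)\cdots\alpha^{kn}(b)-b^{k+1}\|\le (k+1)\|b\|^{k}\varepsilon$, whence $|\mu(b\,\alpha^{n}(b)\cdots\alpha^{kn}(b))|\ge c-(k+1)\|b\|^{k}\varepsilon\ge c/2$ once $\varepsilon$ is chosen small. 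Summing over the set $q\mathbb{Z}\cap[1,N]$, of density $1/q$, yields
\[
\liminf_{N}\frac{1}{N}\sum_{n=1}^{N}\big|\mu\big(b\,\alpha^{n}(b)\cdots\alpha^{kn}(b)\big)\big|\ge\frac{c}{2q}>0 .
\]
Since the limit exists it equals this $\liminf$, and combined with the first display this proves the corollary. The statement "in any other order" follows verbatim: the invariance shift and the preceding corollary both accommodate an arbitrary permutation, and the recurrence estimate is manifestly order-independent, every factor being within $\varepsilon$ of $b$ for $n\in q\mathbb{Z}$.

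The step I expect to be the only non-routine one is the bookkeeping that aligns the exponents $0,n,\dots,kn$ with the $n,2n,\dots,kn$ format of the preceding corollary; this is precisely where $\alpha$-invariance of $\mu$ is invoked, and it should be stated carefully. Everything else is the standard "compact (finite orbit) factor drives recurrence" mechanism, here made elementary because $\alpha$ acts with genuine periodicity on the dense set of generators of $B$.
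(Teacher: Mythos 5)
Your proof is correct and follows essentially the same route as the paper's: replace $a$ by $Ea$ via the preceding corollary together with $\alpha$-invariance of $\mu$, then exploit the periodicity of $\alpha$ on the dense $*$-subalgebra generated by $\{\lambda(g):g\in F\}$ to bound the terms below by $\mu((Ea)^{k+1})-\varepsilon>0$ along an arithmetic progression. You merely spell out details the paper leaves implicit (the exponent shift to match the previous corollary's format, the faithfulness argument for $\mu((Ea)^{k+1})>0$, and the density-$1/q$ lower bound on the Ces\`aro average).
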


\begin{proof}
The limit exists by the previous corollary and the fact that $\mu $ is $\alpha $-invariant.
Let $B_{0}$ be the unital $\ast $-algebra generated by $\left\{\lambda (g):g\in F\right\} $
and consider $b_{0},...,b_{k}\in B_{0}$. Since
the orbits in $F$ are finite, there is an $N_{0}>0$ such that $\alpha
^{jn}(b_{j})=b_{j}$ for $j=1,...,k$ and all $n\in N_{0}\mathbb{Z}$. So, for
$a_{0},...,a_{k}$ in the C*-subalgebra $B$ of $A$ generated by $B_{0}$, and
for arbitrary $\varepsilon >0$, there is an $N_{0}>0$ such that
\begin{equation*}
\left\| a_{0}\alpha ^{n}(a_{1})\alpha ^{2n}(a_{2})...\alpha
^{kn}(a_{k})-a_{0}...a_{k}\right\| <\varepsilon
\end{equation*}
and hence
\begin{equation*}
\left| \mu \left( a_{0}\alpha ^{n}(a_{1})\alpha ^{2n}(a_{2})...\alpha
^{kn}(a_{k})\right) \right| >\left| \mu \left( a_{0}...a_{k}\right) \right|
-\varepsilon
\end{equation*}
for all $n\in N_{0}\mathbb{Z}$. Since $Ea\geq 0$ and $\mu (Ea)=\mu (a)>0$,
we have $\mu \left( (Ea)^{k+1}\right) >0$, hence the result follows by
taking $a_{j}=Ea$ and $\varepsilon <\mu \left( (Ea)^{k+1}\right) $, and
applying the previous corollary (again keeping in mind $\mu \circ \alpha
=\mu $).
\end{proof}

The following corollary is a version of Bergelson's Theorem \cite{B} (an
extension of which was obtained in \cite{HK05}) in classical ergodic theory.
A noncommutative version for asymptotically abelian weakly mixing systems
was studied in \cite{D0}.

\begin{corollary}
For any $p,q\in \mathbb{Z}$
\begin{align*}
& \lim_{N\rightarrow \infty }\frac{1}{N^{2}}\sum_{m=p+1}^{p+N}
\sum_{n=q+1}^{q+N}\left| \mu \left( a_{0}\alpha ^{m}(a_{1})\alpha
^{n}(a_{2})\alpha ^{m+n}(a_{3})\right) \right|  \\
& =\lim_{N\rightarrow \infty }\frac{1}{N^{2}}\sum_{m=p+1}^{p+N}
\sum_{n=q+1}^{q+N}\left| \mu \left( \left( Ea_{0}\right) \alpha
^{m}(Ea_{1})\alpha ^{n}(Ea_{2})\alpha ^{m+n}(Ea_{3})\right) \right|
\end{align*}
for all and $a_{0},\dots,a_{3}\in A$.
\end{corollary}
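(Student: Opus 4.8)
The plan is to deduce everything from $(E,S)$-mixing of all orders (the theorem above, for $k=4$), applied to the state $\mu$, which lies in $S$ since it is the vector state of $\Omega=\delta_1$, hence is given by the rank-one density matrix $|\Omega\rangle\langle\Omega|$. Write $F_N$ and $F_N^E$ for the left- and right-hand averages in the statement, and put
\[
c(m,n):=\mu\bigl(a_0\,\alpha^m(a_1)\,\alpha^n(a_2)\,\alpha^{m+n}(a_3)\bigr),
\]
with $c_E(m,n)$ the same expression with every $a_j$ replaced by $Ea_j$. From $\bigl||c(m,n)|-|c_E(m,n)|\bigr|\le|c(m,n)-c_E(m,n)|$ it suffices to prove
\[
G_N:=\frac{1}{N^2}\sum_{m=p+1}^{p+N}\sum_{n=q+1}^{q+N}\bigl|c(m,n)-c_E(m,n)\bigr|\;\longrightarrow\;0 .
\]
Existence of $\lim_N F_N^E$ follows by the periodicity argument of the corollaries above, now for a doubly periodic function: for $a_j$ in the dense span of the $\lambda(g)$ the elements $Ea_j$ are supported on $F$, whose orbits are periodic, so $(m,n)\mapsto c_E(m,n)$ is periodic in each variable and its modulus has a convergent double Ces\`aro mean; a bounded approximation passes this to all $a_j\in A$. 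Then $|F_N-F_N^E|\le G_N\to0$ gives existence of $\lim_N F_N$ and the desired equality.

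To estimate $G_N$, fix $\varepsilon>0$ and split the box by a threshold $R$: call $(m,n)$ \emph{generic} if $\min(m,n)\ge R$ and $|m-n|\ge R$, and \emph{bad} otherwise. The bad indices lie in the union of the strips $\{\min(m,n)<R\}$ and $\{|m-n|<R\}$, each meeting the $N\times N$ box in $O(RN)$ points; since $|c-c_E|\le 2\|a_0\|\cdots\|a_3\|$ (using $\|\alpha_g\|=1$ and boundedness of $E$), the bad part contributes at most $O(R/N)$, which vanishes as $N\to\infty$ with $R$ fixed.

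On the generic part I exploit $\mu\circ\alpha=\mu$: for every $r\in\mathbb{Z}$,
\[
c(m,n)=\mu\bigl(\alpha^{r}(a_0)\,\alpha^{m+r}(a_1)\,\alpha^{n+r}(a_2)\,\alpha^{m+n+r}(a_3)\bigr),
\]
and likewise for $c_E$, so $c-c_E$ is unchanged by the shift. Taking $r=R$, the four exponents $R,\,m+R,\,n+R,\,m+n+R$ have smallest member $R$ and consecutive gaps $m,\,n-m,\,m$ (when $m<n$) or $n,\,m-n,\,n$ (when $m>n$); in sorted order these realise $\alpha^{n_{p(1)}},\dots,\alpha^{n_{p(4)}}$ with $p$ the identity if $m<n$ and the transposition $(2\,3)$ if $m>n$. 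For a generic index all gaps are $\ge R$ and the smallest exponent is $R$, so choosing $R$ large enough that the mixing limit is within $\varepsilon$ once the smallest exponent and every gap exceed $R$, the theorem gives $|c(m,n)-c_E(m,n)|<\varepsilon$ uniformly over the generic part. Hence $\limsup_N G_N\le\varepsilon$, and letting $\varepsilon\to0$ yields $G_N\to0$.

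The main obstacle I anticipate is the uniformity step: one must extract from the theorem that convergence as $\bar n^{(4)}\to\infty$ is uniform over all exponent tuples whose smallest entry and all consecutive gaps exceed a common bound, and then check that the single shift $r=R$ places every generic $(m,n)$ into this regime --- the diagonal $m=n$ and the coordinate strips being exactly the excluded configurations, and the harmless exponent $0$ on $a_0$ being lifted above the threshold by the shift. The remaining points --- reducing to the span of the $\lambda(g)$ for the periodicity argument, and the choice of permutation $p$ according to the sign of $m-n$ --- are routine given multilinearity and the invariance $\mu\circ\alpha=\mu$.
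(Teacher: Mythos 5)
Your argument is correct and is exactly what the paper intends: its own proof is a one-line remark deferring to the earlier corollaries, and your decomposition (density-zero strips near the coordinate edges and the diagonal, the shift by $R$ via $\mu\circ\alpha=\mu$, and double periodicity on the finite-orbit algebra to get existence of the right-hand limit) supplies precisely the ``small complications'' it alludes to. The uniformity you flag as the main obstacle is not one: $\lim_{\bar{n}^{(k)}\rightarrow\infty}(\cdot)=0$ means by definition that, for the fixed $a_{0},\dots,a_{3}$ and $\varphi=\mu$, the expression is below $\varepsilon$ as soon as the least exponent and every consecutive gap exceed a single threshold $R$ (taking the maximum over the finitely many permutations in $S_{4}$), and since the shift enters only through the exponent $n_{1}=R$ attached to the fixed element $a_{0}$ --- not by feeding $\alpha^{R}(a_{0})$ into the theorem as a new element --- no circularity arises in choosing $R$.
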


\begin{proof}
This is similar to the proofs above, with some small complications and
corresponding modifications to account for the fact that here we sum over a
square.
\end{proof}

In the language of classical ergodic theory, we can say that the
C*-dynamical system $\left( EA,\alpha |_{EA}\right) $ is a characteristic
factor of $\left( A,\alpha \right) $ for the Furstenberg and Bergelson
averages above (see for example \cite[Section 1.3]{HK05}). However, here we
in effect considered a more topological version with many states, including
states that need not be $\alpha $-invariant, while \cite{HK05} considered
the measure theoretic version with a fixed invariant state (given by an
invariant measure).

\subsection*{Acknowledgments}

The research of the first named author (R.D.) is supported by the
National Research Foundation of South Africa. The second named
author (F.M.) acknowledges the MOHE Grants FRGS11-022-0170,
ERGS13-024-0057. He also thanks the Junior Associate scheme of the
Abdus Salam International Centre for Theoretical Physics, Trieste,
Italy.

\end{document}